    \numberwithin{equation}{section}
    \newcommand{\R}{\mathbb{R}}
    \newcommand{\I}{\mathcal{I}}
        \newcommand{\J}{\mathcal{J}}
\newcommand{\N}{\mathbb{N}}
\newcommand{\1}{\mathbf{1}}
\newcommand{\Prob}{\mathbb{P}}
\newcommand{\ra}{\rightarrow}
\newcommand{\dx}{\mathrm{d}}
\bmdefine\mub{\mu}
\bmdefine\etab{\eta}
\bmdefine\varthetab{\vartheta}
\bmdefine\varphib{\varphi}
\bmdefine\betab{\beta}
\bmdefine\lab{\lambda}
\bmdefine\sigmab{\sigma}
\bmdefine\varsigmab{\varsigma}
\bmdefine\taub{\tau}
\bmdefine\rhob{\rho}
\bmdefine\pib{\pi}
\bmdefine\varrhob{\varrho}
\bmdefine\gammab{\gamma}
\bmdefine\Gammab{\Gamma}
 \theoremstyle{plain}
\newtheorem{thm}{Theorem}[section]
\newtheorem{cor}[thm]{Corollary}
\theoremstyle{definition}
\newtheorem{defn}[thm]{Definition}
\theoremstyle{remark}
\newtheorem{rem}[thm]{Remark}
\newtheorem{example}[thm]{Example}
\title{{PATHWISE SOLVABILITY OF   STOCHASTIC INTEGRAL 
EQUATIONS WITH GENERALIZED DRIFT   AND NON-SMOOTH DISPERSION  FUNCTIONS
\footnote{We are deeply indebted to Hans-J\"urgen Engelbert, Cristina Di Girolami, Gechun Liang, Nicolas Perkowski, Vilmos Prokaj,  Francesco Russo, Phillip Whitman, and Marc Yor  for   very helpful comments and suggestions on the subject matter of this paper. We are grateful to the associate editor and the referee  for their careful reading of the manuscript and their insightful comments.  I.K.~acknowledges generous support from  the National Science Foundation, under  grant NSF-DMS-14-05210.  J.R.~acknowledges generous support from the Oxford-Man Institute of Quantitative Finance, University of Oxford, where a major part of this work was completed. } 
}}
\author{  
 IOANNIS KARATZAS \thanks{
Department of Mathematics,  Columbia University, New York, NY 10027, USA  (E-mail: {\it ik@math.columbia.edu}) and       \textsc{Intech} Investment Management,  One Palmer Square, Suite 441, Princeton, NJ 08542, USA    (E-mail:    {\it ik@enhanced.com}).}  
 \and
 JOHANNES RUF      \thanks{ 
Department of Mathematics, University College London, Gower Street, London WC1E 6BT, United Kingdom (E-mail:    {\it j.ruf@ucl.ac.uk }).
          }
                                      }
\begin{document}

\maketitle

\begin{abstract}
\noindent
We study 
one-dimensional stochastic integral equations with non-smooth dispersion co\"efficients, and with drift components that are not restricted to be absolutely continuous with respect to   Lebesgue measure. In the spirit of Lamperti, Doss and Sussmann, we relate solutions of such equations to solutions  of certain ordinary integral equations, indexed by a generic element of the underlying probability space. This relation  allows us to solve the stochastic integral equations in a pathwise sense.

\medskip
\noindent
{\it R\'esum\'e:} Nous \'etudions des \'equations int\'egrales stochastiques  unidimensionnelles
avec coefficient de diffusion non-r\'egulier, et avec terme de d\'erive non
 n\'ecessairement absolument continues par rapport \`a la mesure de Lebesgue. En s'inspirant de Lamperti, Doss et Sussmann,  la r\'esolution de ces \'equations  se ram\`ene \`a la r\'esolution de certaines \'equations int\'egrales ordinaires,  param\'etr\'ees par un \'el\'ement $\,\omega$ variant dans l'espace de probabilit\'e de base. Ce lien nous permet de r\'esoudre les \'equations int\'egrales stochastiques d'une fa\c{c}on  ``trajectorielle".  
\end{abstract}

\begin{center}
{ {\it Annales de l'Institut Henri Poincar\'e,} to appear.}
\end{center}

\noindent{\it Keywords and Phrases:}  Stochastic integral equation; ordinary integral equation; pathwise solvability; existence; uniqueness; generalized drift; Wong-Zakai approximation; support theorem; comparison theorem; Stratonovich integral.

\smallskip
\noindent{\it AMS 2000 Subject Classifications:}  34A99, 45J05; 60G48; 60H10.

\section{Introduction}
 \label{sec0}

Stochastic integral equations (SIEs) are   powerful tools for modeling dynamical  systems subject    to random perturbations.  Any such equation has two components: a stochastic integral with respect to a process that models the ``underlying noise'' of the system, and a drift term that models some ``trend.''  In many applications, the drift term is assumed to be absolutely continuous with respect to   Lebesgue measure on the real line. However, motivated by the pioneering work of \citet{Walsh} and \citet{HarrisonShepp}  on the   ``skew Brownian motion,''  several authors have   studied SIEs without such a continuity assumption  in quite some generality,  beginning with \citet{StroockYor1981}, \citet{LeGall1984}, \citet{BarlowPerkins}, and \citet{Engelbert_Schmidt_lecture}. In the  years since,
\citet{Engelbert_Schmidt_1991}, \citet{Engelbert1991}, \citet{FlandoliRussoWolf1, FlandoliRussoWolf2}, \citet{BassChen}, \citet{RussoTrutnau},  and \citet{BleiEngelbert2012, BleiEngelbert2013} have provided deep existence and uniqueness  results about such equations.

In this present work  we extend the {\it pathwise approach} taken by \citet{Lamperti1964}, 
\citet{Doss1977} and  \citet{Sussmann1978}, who focused on the case of an absolutely continuous drift and of a smooth ($ C^2$) dispersion function, to one-dimensional SIEs with   generalized drifts and with strictly positive dispersions which, together with their reciprocals, are of  finite first variation  on compact subsets of the state space.  This pathwise approach   proceeds via a suitable transformation of the underlying SIE which replaces the stochastic integral component by the   process that models the driving noise (a Brownian motion or, more generally, a continuous semimartingale); this noise process   enters the transformed equation, now an {\it  Ordinary} (that is, non-stochastic) {\it Integral Equation}  (OIE), only parametrically through its co\"efficients.  Such a transformation emphasizes the pathwise character of the SIE, that is,   highlights the representation of the solution process (``output") as a measurable and non-anticipative functional of the driving noise (``input"). The pathwise point of view allows the modeler, who tries to solve the SIE, to construct an  input-output map  without having to worry about stochastic integration, which notoriously obscures the dependence of the solution path  on the Brownian (or semimartingale) path, due to the ``$\mathbb{L}^2$--smearing'' of stochastic integration.

We emphasize here also   the reverse implication:  If one can show, say  via probabilistic methods, that a certain SIE has a solution, then this directly yields existence results for certain OIEs. Such OIEs often may have very irregular input functions, so that such existence results would be very hard to obtain via standard analytical arguments.

\medskip
\noindent
{\bf Overview:} Section~\ref{SS:setup} provides the setup, and Section~\ref{sec02} links an SIE with generalized drift  to a collection of related OIEs.  While we rely on some rather weak assumptions on the dispersion function, such as   time-homogeneity and finite variation on compact subsets of the state space, we  make hardly any assumptions on the drift function; we allow it, for example,  to  depend explicitly on the input noise. We discuss also the Stratonovich version of the SIE with generalized drift and non-smooth dispersion function  under consideration. 

 Section~\ref{sec01} provides several examples, primarily in the context of three specific setups:   Subsection~\ref{SS:5.1} discusses the case when the drift does not depend on the solution process of the SIE itself; Subsection~\ref{SS:Ex2} treats the situation when the drift is absolutely continuous with respect to Lebesgue measure; and Subsection~\ref{sec3.1}   treats the case of time-homogeneous co\"efficients when the input process is a Brownian motion.   Finally, Subsection~\ref{SS:Ex4} provides an example related to skew Brownian motion.

Section~\ref{comp} presents a comparison result in the spirit of Section~VI.1 in \citet{Ikeda_Watanabe} but using entirely different methods and with quite  broader scope. 
Finally, Section~\ref{cont} establishes under appropriate conditions the continuity of the input-output map in the sense of \citet{Wong_Zakai_a, Wong_Zakai_b} for equations of the type studied in this paper.  Appendix~\ref{S: regul} summarizes aspects concerning the regularization of OIEs by means of additive noise.

\section{Setup, notation, and examples}  
\label{SS:setup}

\subsection{Path space}

We place ourselves on the canonical path space $\,\Omega = C([0,\infty); \R)$    of   continuous functions $\, \omega: [0,\infty)\rightarrow  \R\,$, equipped with the topology of uniform convergence on compact sets,    denote by   $W (\cdot)= \{W(t)\}_{0 \leq t < \infty}$   the co\"ordinate mapping process $\, W(t,\omega) = \omega (t),\, ~0 \le t < \infty\,$ for all $\omega \in \Omega \,$, and consider the filtration  $\,\mathbb{F}^W =  \{ \mathcal{F}^W (t) \}_{0 \le t < \infty}$ with $\,\mathcal{F}^W (t) = \sigma ( W(s), \, 0 \le s \le t)\,$   generated by  $W (\cdot)$;  
this filtration   is left-continuous, but   not right-continuous. We introduce its right-continuous version $\,\mathbb{F}  =  \{ \mathcal{F}  (t) \}_{0 \le t < \infty}$ by setting $\, {\cal F} (t) := \bigcap_{s>t} \mathcal{F}^W (s)\,$ for all $\, t \in [0, \infty)\,$, and define  $\,\mathcal{F} \equiv \mathcal{F}^W(\infty) := \bigvee_{0 \leq t < \infty} \mathcal{F}^W(t)$.

  We shall consider, on the measurable space $(\Omega, \mathcal{F})$, the collection $\, \mathfrak{P}\,$ of {\it semimartingale measures}, that is, of  probability measures $\Prob$  under which the canonical process $W (\cdot)\,$ is a semimartingale in its own filtration  $\mathbb{F}^W$. 
The Wiener measure $\,\Prob_*\,$ is the  prototypical  element of $\, \mathfrak{P}\,$; and under every $\, \mathbb{P} \in  \mathfrak{P}\,$, the canonical process can be thought of as the ``driving noise" of the system that we shall consider.

We fix  an open interval $\, \I = (\ell, r)\,$ of the real line, along with some starting point $\,x_0 \in \I\,$; the interval $\, \I  \,$ will be the state-space of the  solutions to the     equations which are the subject of this work.   We shall denote by $\,\overline{\I}\,$ the one-point compactification of $\,\I\,$, that is, $\,\overline \I = \I \cup \{\Delta\}$ for some $\Delta \notin \I$.  We shall consider also the space $\,\Xi = C_a \big([0,\infty); \overline{\I}\,\big)$  of    $\,\overline{\I}$--valued continuous functions  that get absorbed when they hit the ``cemetery point" $\Delta$. We 
use for all $n \in \N$, $ \, \mathrm{x} \in  C_a([0,\infty);  \overline \I)\,$ the stop-rules \begin{equation}
\label{eq:Sn}
\mathcal{S}_n (\mathrm{x}) \,:=\, \inf \left\{ t \ge 0\,:\,  \mathrm{x} (t) \notin \big(\ell^{(n)}, r^{(n)} \big) \right\}.
\end{equation}
 Here $\{ r_n\}_{n \in \mathbb{N}}$  (respectively, $\{ \ell_n\}_{n \in \mathbb{N}}$)  are some strictly increasing (respectively, decreasing) sequences    satisfying 
$\,
\ell <\ell_{n+1} <  \ell_{n} < x_0 < r_{n}  < r_{n+1}<r
\,$  
for all $\,n \in \mathbb{N}\,$, as well as  $\,\lim_{n \uparrow \infty} \uparrow\,  r_{n}= r$ and $\,\lim_{n \uparrow \infty} \downarrow   \ell_{n} =\ell$.  For later use  we also introduce, for every   path $\,\mathrm{x} \in \Xi\,,$ the stop-rule
\begin{equation}
\label{eq:S}
\mathcal{S} (\mathrm{x}) \,:=  \lim_{n \uparrow \infty} \uparrow \, \mathcal{S}_n (\mathrm{x})\,,  
\end{equation} 
as well as   the following quantities:
	\begin{itemize}
		\item   the  double sequence $\,\big\{ \tau^{(i,n)}\big\}_{(i,n) \in \N_0^2}\,$ of  stop-rules   defined inductively by $\tau^{(0,n)}(\mathrm{x}) = 0$ and $$\tau^{(i+1,n)}(\mathrm{x}) \,=\, \inf\left\{t \geq \tau^{(i,n)}(\mathrm{x}) : \big|\mathrm{x}(t) - \mathrm{x}\big(\tau^{(i,n)}(\mathrm{x}) \big)\big| \geq 2^{-n}\right\} $$ 
for all  $\,(i ,n) \in \N_0^2\,$; the quadratic variation  
		$\langle \mathrm{x} \rangle(\cdot)$ of the path $\mathrm{x}(\cdot)$, defined  
		as
\begin{align}
 \label{QV}
	\langle \mathrm{x} \rangle(t) \,=\,  \liminf_{n \uparrow \infty}\, \sum_{i \in \N}  \left(\mathrm{x}\big(t \wedge \tau^{(i+1,n)}\big) - \mathrm{x}\big(t \wedge \tau^{(i,n)}\big)\right)^2\,, \qquad 0 \le t < \infty,
	\end{align}
 formally  with the convention $\Delta - \Delta = 0\,$;  and 
 		\item   the {\it right} local time   
		$L^\mathrm{x}(\cdot \,, \xi )$  of the path $\mathrm{x}(\cdot)$ at the site $\xi \in \I\,$, defined as
			\begin{align} 
			\label{eq: locTime}
				L^\mathrm{x}(T, \xi ) &\,=\, \limsup_{\varepsilon \downarrow 0} \,\frac{1}{\, 2\,\varepsilon \,} \int_0^T \1_{[\xi, \xi+\varepsilon)}(\mathrm{x}(t)) \, \dx \langle\mathrm{x}\rangle(t)\,, \qquad 0 \le T < \infty\,. 
			\end{align}
	\end{itemize}
We denote by   $\,X (\cdot)= \{ X (t) \}_{0 \leq t < \infty}$ the co\"ordinate mapping process $\, X (t,\mathrm{x}) = \mathrm{x} (t),\, ~0 \le t < \infty\,$ for all $\,\mathrm{x} \in \Xi,$  and introduce the filtration  $\,\mathbb{F}^{ X } =  \{ \mathcal{F}^{ X}  (t) \}_{0 \le t < \infty}$ with $\,\mathcal{F}^{ X}  (t) = \sigma ( X (s), \, 0 \le s \le t)\,$   generated by this new canonical  process $X  (\cdot)\,.$ 

Let us recall from Definition~3.5.15 in \citet{KS1} the notion of  progressive measurability, and adapt it to the present circumstances. We say that a mapping $ \,\mathfrak{M}: [0,\infty) \times C([0,\infty); \R) \times C_a \big([0,\infty); \overline{\I}\,\big) \rightarrow  [-\infty,\infty]\,$ is {\it progressively measurable,} if for every $\, t \in [0, \infty)\,$ its restriction to $ [0,t] \times C([0,\infty); \R) \times C_a \big([0,\infty); \overline{\I}\,\big)\,$ is $\, \mathcal{B }([0,t]) \otimes \mathcal{F}^W  (t) \otimes \mathcal{F}^X  (t) \,/\, \mathcal{B}([-\infty, \infty])$--measurable. 

We observe that   $(t,\mathrm{x} ) \mapsto \langle \mathrm{x} \rangle( t)$ as in \eqref{QV} is a progressively measurable functional on $\,[0,\infty)   \times C_a \big([0,\infty); \overline{\I}\,\big)$,   and that so is the functional		$\, (t,\mathrm{x} ) \mapsto L^\mathrm{x}(  t\,, \xi)$  in \eqref{eq: locTime}  for each given  $\xi \in \I$.  Finally, let us  recall from Subsection~7.14 in \citet{Bichteler_1981},     Subsection~II.a in \citet{Bertoin1987}, and Corollary~VI.1.9 in \citet{RY} that, for any continuous semimartingale $X(\cdot)$,  its quadratic variation can be cast as $ \langle X \rangle (\cdot) = X^2 (\cdot) - X^2 (0) - 2 \int_0^{\, \cdot} X(t)\, \dx X (t)\,,$ and its {\it right} local time has the representation   in \eqref{eq: locTime}.

\subsection{Ingredients of the stochastic integral equation}
  \label{sec22}

In order  to  describe the stochastic integral equation under consideration,  we place ourselves  on the filtered measurable space $(\Omega, \mathcal{F})$, $\,\mathbb{F}  =  \{ \mathcal{F}  (t) \}_{0 \le t < \infty}\,$. We shall fix throughout a measurable function $\,\mathfrak{s}   : \I \ra (0, \infty)\,$  with the property
\begin{equation}
\label{FV}
\log \mathfrak{s} (\cdot) \,~\text{is left-continuous and of   finite first variation on compact subsets of }  \,\I\,. 
\end{equation}
We define
	\begin{align}
\label{eq:J}
	\widetilde{\ell}_{ n } := - \int_{\ell_n}^{x_0} {\dx z \over \,  \mathfrak{s} (z)\,}, \quad \widetilde r_{ n } :=  \int_{x_0}^{r_n} {\dx z \over \,  \mathfrak{s} (z)}\,, \quad \widetilde{\ell} = \lim_{n \uparrow \infty} \widetilde{\ell}_{  n }\,,  \quad \widetilde{r} = \lim_{n \uparrow \infty} \widetilde{r}_{ n }\,,   \quad {\cal J}:=\big( \widetilde{\ell}, \widetilde{r}\,\big) 
\end{align}
and, for all $n \in \N$ and $\mathrm{y} \in  C_a([0,\infty);  \overline\J)$, the stop-rules
\begin{equation}
\label{eq:S2}
 \widetilde{\mathcal{S}}_n (\mathrm{y}) \,:=\, \inf \left\{ t \ge 0\,:\,  \mathrm{y} (t) \notin \big(\widetilde{\ell}_{ n }\,, \widetilde{r}_{ n } \big) \right\},  \qquad \widetilde{\mathcal{S}} (\mathrm{y}) \,:= \lim_{n \uparrow \infty} \uparrow \, \widetilde{\mathcal{S}}_n (\mathrm{y}),
\end{equation}
 where $C_a([0,\infty);  \overline\J)$ is defined in the same manner  as $C_a([0,\infty);  \overline\I)$. 

\smallskip
We shall also fix   a progressively measurable mapping $\, 
\mathfrak{B}: [0,\infty) \times C([0,\infty); \R) \times C_a \big([0,\infty); \overline{\I}\,\big) \rightarrow  \R
\,$  
with $ \mathfrak{B} (0, \cdot, \cdot)=0$.  For instance,     $\mathfrak{B}(\cdot, \cdot, \cdot)$ can be of the form $\mathfrak{B}(T, \cdot, \mathrm{x}) = \int_0^T\mathfrak{b}(\mathrm{x}(t)) \dx t\, $ for some bounded, measurable function $\,\mathfrak{b}: \overline\I \rightarrow \R$, for all $\,T \in [  0, \infty) $ and $\,\mathrm{x} \in   C_a([0,\infty);  \overline{\I})
$.

For any given semimartingale   measure $\, \Prob \in \mathfrak{P}\,$, we shall be interested in the pathwise solvability of SIEs  of the form
\begin{align}
\label{1}
X(\cdot) \,=\, x_0 + \int_0^{\,\cdot} \mathfrak{s} (  X(t)) \big[ \,\dx W(t) + \dx \mathfrak{B} (t,W,X)\,\big] - \int_{\I} L^X (\cdot\, , \xi) \, \mathfrak{s}  (\xi)\, \dx   \frac{1}{\mathfrak{s}  (\xi)}   
\end{align}
on the filtered probability space $\, ( \Omega, {\cal F}, \Prob),\,  \mathbb{F}
=  \{ \mathcal{F}
(t) \}_{0 \le t < \infty}\,$, where the local time $L^X(\cdot\,, \cdot)$ is defined as in \eqref{eq: locTime}. From a systems-theoretic  point of view,   the   process $\, X(\cdot)\,$ represents the ``state'' or ``output'',    and the canonical process $\,W(\cdot)\,$   the     ``input'', of the system with the dynamics of  \eqref{1}. The solution of this equation is defined in general only up until the {\it explosion time} $\, \mathcal{S} (X) \in (0, \infty]$.

More precisely, we have the following formal notions of solvability (Definitions \ref{Def1} and \ref{Def2} below).

\begin{defn}
\label{Def1}
For any given semimartingale   measure $\, \Prob \in \mathfrak{P}\,$, we shall call a process $X (\cdot)$ with values in $C_a \big([0,\infty); \overline{\I}\,\big)$  a {\it solution to the SIE~\eqref{1}} on the filtered probability space $\, ( \Omega, {\cal F}, \Prob),\,  \mathbb{F}
=  \{ \mathcal{F}
(t) \}_{0 \le t < \infty}\,$ up until a stopping time $\, \mathcal{T}\,$ with $\, 0 < \mathcal{T} \le \mathcal{S}(X)\,$, if the following conditions hold  on the stochastic interval $\, [0, \mathcal{T})$: 
\begin{enumerate}[(i)]
	\item the process  $X(\cdot)$ is a continuous $\, \mathbb{F}
	$--semimartingale;	
	\item the process $\mathfrak{B}(\cdot\,, W, X)$ is continuous and of finite first variation on compact subintervals; 
		\item the equation in \eqref{1} holds. \qed
\end{enumerate}
\end{defn}

Point~(iii) in the above definition requires the notion of   stochastic integral. We refer to Section~I.4.d in \citet{JacodS}, and to Section~4.3 in \citet{SV_multi}, for the development of  stochastic integration with respect to a right-continuous  filtration which is not necessarily  augmented by the null sets of the underlying probability measure.  

\begin{defn}
\label{Def2}
By \emph{pathwise solvability} of the SIE~\eqref{1} over a stochastic interval $  [0, \mathcal{T})$ for some stopping time $   0<\mathcal{T} \le {\cal S} (X) $, we mean the existence of a progressively measurable functional $\,\mathfrak{X}: [0,\infty) \times C([0,\infty); \R) \rightarrow \overline \I\,$ such that 
\begin{enumerate}[(i)]
\item  the process $\,X (\cdot) = \mathfrak{X}(\cdot \,,W)$ solves on the interval $  [0, \mathcal{T})$ the SIE~\eqref{1} under {\it any} semimartingale measure $\, \Prob \in \mathfrak{P}\,$; and
\item  the ``input-output" mapping $\,(t, \omega) \mapsto \mathfrak{X}(t, \omega)\,$ is determined   by solving, for each $\, \omega \in C ([0,\infty); \R) $, an appropriate Ordinary (or more generally, Functional) Integral Equation (OIE, or OFE).  \qed
\end{enumerate}
\end{defn}

A solution  as mandated by Definition \ref{Def2}   is obviously {\it strong,} in the sense that  the random variable $X(t)=   \mathfrak{X}( t \,,W)$ is measurable with respect to the sigma algebra $\mathcal F^W(t)\,$  for each $\, t \in [0, \infty) $; and   no stochastic integration with respect to $W(\cdot)$ is necessary for   computing   the input-output mapping $\,\mathfrak{X}(\cdot\,, \cdot)$.

 \begin{rem}
 \label{1/s}
The function  $\mathfrak{s}  (\cdot)$, under the requirements of \eqref{FV},  is  bounded away from both zero and infinity over compact subsets of $\,\I\,$; this is because the condition  \eqref{FV} implies  that the functions   $\,\mathfrak{s}(\cdot) = \exp \big(\log(\mathfrak{s}(\cdot))\big)\,$  and   $\,1/\mathfrak{s}(\cdot) = \exp \big(-\log(\mathfrak{s}(\cdot))\big)\,$  are left-continuous and of finite first variation  on compact subsets   of $\,\I\,$. It follows then from these considerations that 
\begin{align}
\label{LI_new}
{ 1 \over \,\mathfrak{s}  (\cdot)\,} ~\, \text{ is   bounded on compact subsets of }\, \, \I\,.
\end{align}
If the function $\,\mathfrak{s}(\cdot)$ is bounded away from zero and of finite first variation on compact subsets of $\,\I\,$, then $\log \mathfrak{s} (\cdot) \,$ is of finite first variation on compact subsets of $\,\I$. However, if $\mathfrak{s}(\cdot)$ is not bounded away from zero, this implication does not hold; for instance, with $\,\I = \R\,$, and $\mathfrak{s}(x) = 1$ for all  $x \leq 0$, and $\mathfrak{s}(x) = x$ for all $x > 0$, the function  $\mathfrak{s} (\cdot)$   is of finite first variation on compact subsets of $\,\I\,$, but  $\,\log \mathfrak{s} (\cdot)\, $  is not.  Moreover, in the setting under consideration,    the process $\,\mathfrak{s}(X(\cdot))$ is integrable with respect to both $\,\mathfrak{B}(\cdot\,,W, X)$ and   the driving semimartingale $W(\cdot)$.      
 \qed
\end{rem}

 \begin{rem}
 \label{insidious}
Using the   property 
$ \,
\int_{x_0}^{\, \cdot} \mathfrak{s}  (\xi   )\, \dx \big( 1/ \mathfrak{s}  (\xi  )\big)  \,+ \int_{x_0}^{\, \cdot} \big( 1 / \mathfrak{s}  (\xi  +)\big)\, \dx \,\mathfrak{s}  (\xi  ) \equiv  0 \,
$ 
of Lebesgue-Stieltjes integration (e.g., Proposition~0.4.5 in \citet{RY}), we see that the last term in \eqref{1} can be written equivalently as 
\begin{equation}
\label{insid}
  - \int_{\I} L^X (\cdot\, , \xi) \, \mathfrak{s}  (\xi)\, \dx   \frac{1}{\mathfrak{s}  (\xi)} \,=\, \int_\I L^X(\cdot\,, \xi)\,  \frac{\,\dx \mathfrak{s} (\xi)\, }{\mathfrak{s} (\xi +)}\,.
\end{equation}

Consider now the SIE~\eqref{1}  with $ \mathfrak{B} (\cdot\,, \cdot\,, \cdot)\equiv 0\,$,  under a probability measure $\, \mathbb{P} \in  \mathfrak{P}\,$   which renders the canonical process $\, W(\cdot)\,$   a local martingale.  By virtue of \eqref{insid},  we have     the expression 
$$
L^X(T, \xi ) - L^X(T, \xi -) \,=\, L^X(T, \xi ) \, \frac{ \, \mathfrak{s}  (\xi +  ) - \mathfrak{s}  (\xi  ) \, }{\mathfrak{s}  (\xi + )}\,, \qquad \mathbb{P}\text{--a.e.~on the event} \, \,\{ {\cal S} (X) >T\}
$$
for the jump of the local time of   $\,X(\cdot)\,$ at the site $\, \xi \in \I$ and at time $T \geq 0$; here, we are using    $$\,L^X(T, \xi ) - L^X(T, \xi -) = \int_0^T \1_{ \{ X(t) = \xi \} } \, \dx V(t) \,,$$ a basic    property of local time for a continuous semimartingale $X(\cdot)=X(0) + M (\cdot)+ V (\cdot)\,$ (for instance, 
Theorem~VI.1.7 in \citet{RY}). This leads to the ``balance equation"  $$ \mathfrak{s}  (\xi + )\, L^X(T, \xi -) =  \mathfrak{s}  (\xi   )\, L^X(T, \xi  ),$$  and   expresses the {\it symmetric local time} $\,  \widehat{L}^X(T, \xi ) := \big( L^X(T, \xi ) + L^X(T, \xi -) \big) \big/2\,$ as 
 \begin{align*}
 \widehat{L}^X(T, \xi ) \,=\, \frac{1}{\,2\,} \left( 1 + \frac{\,  \mathfrak{s} (\xi)\, }{\,\mathfrak{s} (\xi +)\,} \right)   L^X(T, \xi )\,,  \qquad \mathbb{P}\text{--a.e.~on the event} \, \,\{ {\cal S} (X) >T\} 
 \end{align*}
for each $T \geq 0$. \qed
\end{rem}

\subsection{Stratonovich interpretation}
 \label{sec03}

The SIE~\eqref{1} can be cast in the Stratonovich form
\begin{equation}
\label{straton}
X(\cdot) \,=\, x_0 + \int_0^{\,\cdot} \mathfrak{s} (X(t))  \circ  \dx W(t) + \int_0^{\,\cdot} \mathfrak{s} (  X(t))\, \dx  \mathfrak{B} (t, W, X)       
\end{equation}
when the dispersion fuction $\,\mathfrak{s} (\cdot)\,$ is the {\it difference of two convex functions}, i.e.,  can be written as the primitive of some real-valued function $\, \mathfrak{r} (\cdot)\,$ with finite first variation on compact subsets of $\, \I\,$: namely, as $$\mathfrak{s} (x)=  \mathfrak{s} (c) + \int_c^x \mathfrak{r} (\xi)\, \dx \xi\,, \qquad x \in \I$$ for some $c \in \I$. For   convenience we shall  adopt  the convention that the function $\, \mathfrak{r} (\cdot)\,$  is  left-continuous.

Indeed, in this case the process $\,  \mathfrak{s} (X(\cdot))\,$ is a continuous semimartingale with decomposition
\begin{align*}
 \mathfrak{s} (X(\cdot)) &-  \mathfrak{s} (x_0)  = \int_0^{\, \cdot}  \mathfrak{r} (X( t))\,\dx X(t) + \int_\I L^X(\cdot\,, \xi)\, \dx  \mathfrak{r} (\xi) \\
&=   \int_0^{\, \cdot}  \mathfrak{r} (X( t))\, \mathfrak{s} (X( t))\,\big[ \,  \dx W(t) +   \dx  \mathfrak{B} (t, W, X)\, \big] + \int_\I L^X(\cdot\,, \xi) \left[ \dx  \mathfrak{r} (\xi) - \mathfrak{r} (\xi) \,\mathfrak{s} (\xi)\,  \dx \frac{ 1}{\mathfrak{s} (\xi) }\right] 
\end{align*}
by the generalized It\^o--Tanaka formula. 
Therefore, the Stratonovich and It\^o integrals are then related via 
\begin{align*}
\int_0^{\,\cdot} \mathfrak{s} (X(t))  \circ  \dx W(t) & - \int_0^{\,\cdot} \mathfrak{s} (X(t))     \dx W(t)  = { 1 \over \,2\,} \int_0^{\,\cdot} \mathfrak{r} (X(t))  \, \mathfrak{s} (X(t)) \, \dx \langle W \rangle ( t)\,=\, { 1 \over \,2\,} \int_0^{\,\cdot} \frac{\mathfrak{r} (X(t))}{\mathfrak{s} (X(t))} \, \dx \langle X \rangle ( t)\\
&= \int_\I L^X(\cdot\,, \xi)\,  \frac{\,\mathfrak{r} (\xi)}{\mathfrak{s} (\xi)}   \dx \xi \,=\, \int_\I L^X(\cdot\,, \xi)\,  \frac{\,\dx \mathfrak{s} (\xi)\, }{\mathfrak{s} (\xi)}\,=\, - \int_{\I} L^X (\cdot\, , \xi) \, \mathfrak{s}  (\xi)\, \dx \frac{ 1}{\mathfrak{s}  (\xi)  } \,,
\end{align*}
and \eqref{straton} follows from \eqref{1}. We have used here the occupation-time-density property of semimartingale local time; see, for instance, pages~224-225 in \citet{KS1}, as well as Definition~3.3.13 there. 
These considerations allow the interpretation of the last integral in \eqref{1} as a ``singular Stratonovich-type correction term."

\section{Pathwise solvability}
 \label{sec02}

The possibility that an SIE such as that of \eqref{1}
might be solvable pathwise, is suggested by the following observation: If we work under the Wiener measure $\, \Prob_*\,$,  if the function $\mathfrak{s} (\cdot)\,$ is   continuous and continuously differentiable, and if $\,\mathfrak{B}(T, \cdot, \mathrm{x}) = \int_0^T \mathfrak{b}(t,\mathrm{x}(t)) \dx t\,$ for some bounded measurable function $\mathfrak{b}: [0,\infty) \times \overline \I \rightarrow \R$ for all $T \geq 0$ and $\mathrm{x} \in C_a([0,\infty), \overline \I)$,  then,  on the strength of the occupation-time-density property of semimartingale local time,  the corresponding SIE
\begin{align}
\label{1c}
X(\cdot) \,=\, x_0 + \int_0^{\,\cdot} \mathfrak{s} (  X(t))\, \big[ \,\dx W(t) + \mathfrak{b} (t, X(t)) \,\dx t \,\big] - \int_{\I} L^X (\cdot\, , \xi) \, \mathfrak{s}  (\xi)\, \dx \frac{1}{ \mathfrak{s}  (\xi) } 
\end{align}
of \eqref{1}  takes  the familiar  form
\begin{equation}
\label{1a}
X(\cdot) \,=\, x_0 + \int_0^{\,\cdot} \mathfrak{s} (X(t))  \left[ \,\dx W(t) + \Big( \mathfrak{b} (t, X(t)) + { 1 \over \,2\,} \, \mathfrak{s}^\prime (X(t))\Big) \,\dx t\,\right]      ;
\end{equation}
\noindent
whereas, if  the function $\,\mathfrak{s} (\cdot)\,$ is   twice continuously differentiable, this equation can be cast  in terms of {Stratonovich} stochastic integration as 
\begin{equation*}
X(\cdot) \,=\, x_0 + \int_0^{\,\cdot} \mathfrak{s} (X(t))  \circ  \dx W(t) + \int_0^{\,\cdot} \mathfrak{s} (  X(t))\,\mathfrak{b} (t, X(t))    \,\dx t  \, .
\end{equation*}
 From the results of \citet{Doss1977} and \citet{Sussmann1978} we know  that, at least in the case $ \,\I = \R\,$, solving this latter SIE~\eqref{1a} amounts  to solving pathwise an ordinary   integral equation in which the  source of randomness, that is, the $\, \Prob_*$--Brownian motion $W(\cdot),$  appears only parametrically  through its co\"efficients,   {\it not in terms of stochastic integration;} see the OIE~\eqref{7}   below. This ``classical" theory is also covered in books, for instance in Section~III.2 of \citet{Ikeda_Watanabe},     Section~5.2.D in \citet{KS1}, or Chapter~2 in \citet{LyonsQian2002}.

\subsection{Basic properties of the space transformation}

For each $c \in \I$,  we   define the strictly increasing function $\, H_c : (\ell,r) \rightarrow (-\infty, \infty)\,$   by 
\begin{equation}
\label{3}
H_c(x) \,:=\, \int_c^x {\dx z \over \,  \mathfrak{s} (z)\,}, \qquad x \in \I.
\end{equation}
Here and in what follows, for the context of Lebesgue-Stieltjes integration, we define $\,\int_y^y f(z) \dx z = 0\,$ as well as  $\,\int_y^x f(z) \dx z = - \int_x^y f(z) \dx z$ for all $\,(x,y) \in \R^2\,$ with $\,x < y\,$ and an appropriate function $f$.
We note that the  function $H_c(\cdot)$ is indeed well-defined for all $\,c \in \I\,$, thanks to \eqref{LI_new}.
Next, for each $\, c \in \I\,$, we set
\begin{align} \label{eq:lc,rc}
	\widetilde{\ell} (c) := H_c ( \ell + ) \,:=\, \lim_{x \downarrow \ell} H_c (x) \in [-\infty, \infty), \qquad 
\widetilde{r}(c) := H_c ( r - ) \,:=\, \lim_{x \uparrow r} H_c (x) \in (-\infty, \infty]. 
\end{align}
We   note   $\, {\cal J}  =(\widetilde{\ell} (x_0)\,,\, \widetilde{r} (x_0)\big)\,$ in the notation of \eqref{eq:J}, and consider   the domain
\begin{align}  
\label{eq:D}
	\mathcal{D}\, := \,\left\{(c,w) \in \I \times \mathbb{R} \,:\,  \,\,  w \in \big(\widetilde{\ell} (c)\,,\, \widetilde{r} (c)\big) \right\} .
\end{align}

For later use,   we  observe that
\begin{equation}
\label{4}
D H_c (x) \,=\, { 1 \over \,\mathfrak{s} (x)\,}\, , \qquad (c,x)   \in \I^{\,2},
\end{equation}
where  the symbol $\, D\,$ stands for differentiation with respect to the second, parenthetical argument. The derivative is considered in its left-continuous version.
The inverse function $\, \Theta_c : \big( \widetilde{\ell} (c), \widetilde{r} (c) \big) \rightarrow \I \, $ of $H_c(\cdot)$
is also well-defined for each $c \in \I$, thanks to the strict positivity of the function $\mathfrak{s}(\cdot)$, and satisfies $ \Theta_c (0) \,=\, c$ as well as 
\begin{equation}
\label{5}
  D \Theta_c  (w) \,=\, \mathfrak{s} \big(\Theta_c (w) \big),  \qquad (c, w)  \in  {\cal D}.
\end{equation}
Once again, the derivative is considered in its left-continuous version.  
In particular, for each given $\, c \in \I$, the functions $\, x \mapsto H_c (x)\,$  and  $\, w \mapsto \Theta_c (w)\,$ are strictly increasing.
\begin{rem} 
\label{R:ODE}
We note that, for each $c \in \I$, the function $\Theta_c(\cdot)$ solves the OIE 
\begin{align} 
\label{eq: OIE Theta}
\Theta_c (w) = c + \int_0^w \, \mathfrak{s} \big( \Theta_c (\zeta) \big)  \dx \zeta\,, \qquad w \in \big(\widetilde{\ell} (c), \widetilde{r} (c)\big)\,.  
\end{align}
The function $\, \Theta_c (\cdot)\,$ {\it is actually the only solution of the integral equation}  \eqref{eq: OIE Theta}; this is because 
  {\it any} solution $\vartheta(\cdot)\,$ of  the equation~\eqref{eq: OIE Theta}  satisfies 
    $$
H_{c} ( \vartheta(w))= \int_{\vartheta(0)}^{\vartheta(w)   } \frac{1}{\mathfrak{s} (\zeta)}\, \dx \zeta  = \int_{0}^{w} \frac{1}{\mathfrak{s} (\vartheta(u))} \,\dx \vartheta(u) = w\,, \qquad w \in \big(\widetilde{\ell} (c), \widetilde{r} (c)\big), 
$$   
therefore  $\,\vartheta(\cdot)\equiv\Theta_c(\cdot)$. \qed
\end{rem}

 Let us also observe that the additivity property $\, H_c (\lambda) + H_\lambda (\xi) = H_c (\xi)\,$ for all $(c, \lambda,\xi) \in \mathcal{I}^{\,3}$, fairly evident from \eqref{3},  translates into the {\it composition property} 
\begin{equation}
\label{compo}
\Theta_{\Theta_{c} (\gamma)} (w) \,=\, \Theta_c \big( \gamma + w \big)\,, \qquad c\in \mathcal{I}, ~~ \gamma  \in \big( \widetilde{\ell} (c), \widetilde{r} (c) \big), ~~\gamma+ w  \in \big( \widetilde{\ell} (c), \widetilde{r} (c) \big).
\end{equation} 

 Finally  we note that, thanks to the observations in Remark~\ref{1/s},   both functions $\, H_c (\cdot)\,$ and $\, \Theta_c (\cdot)\,$ can be expressed as   differences of two convex functions for each $\, c \in \mathcal{I}\,$.

\subsection{Preview of results}  
\label{SS:preview}

The   {\it Lamperti-type approach}  reduces the problem of solving the SIE~\eqref{1}  under an arbitrary semimartingale measure $\, \Prob \in \mathfrak{P}\,$  to that of solving,  for  all ``relevant'' paths $\,\omega  \in C([0,\infty); \R)\,$,  an Ordinary  Functional Equation (OFE) of the form
\begin{align}
\label{7L}
\Gamma ( t) \,=\,  \mathfrak{B} \big( t\,,   \omega , \Theta_{x_0} \big(  \Gamma + \omega   \big) \big) \,, \qquad 0 \le t < \widetilde{\mathcal S}(\Gamma+\omega)
\end{align}
 in the notation of \eqref{eq:S2},   and then produces a solution of  the SIE \eqref{1} in the notation of \eqref{eq:S},   simply through the pointwise evaluation
\begin{equation}
\label{8L}
X( t) \,:=\,  \Theta_{x_0} \big( \Gamma (t) + W( t)   \big)\,, \qquad 0 \le t < \mathcal{S} (X) = \widetilde{\mathcal{S}}(\Gamma+W)\,.
\end{equation}

The {\it Doss-Sussmann-type approach} relies on the following observation.
  Given a function $\Gamma(\cdot)$ that satisfies the OFE \eqref{7L},  we    can define   the function $\,	C( t) : = \Theta_{x_0} \big(\Gamma( t)\big)\,$,  $\, 0 \le t < \mathcal{S}(C) = \widetilde{\mathcal S} (\Gamma)\,$ 
  and note that    it satisfies an OIE of the form
\begin{align}
\label{7}
C(\cdot) \,=\, x_0 + \int_0^{\, \cdot} \mathfrak{s} \big( C(t) \big) \, \dx \mathfrak{B} \left(t,   \omega, \Theta_{C(\cdot)} ( \omega(\cdot)   ) \right) ,
\end{align}
thanks to the composition property in \eqref{compo}, 
at least up until the first time
\begin{equation}
\label{R}
{\cal R} (C,\omega) \,:=\, \lim_{n \uparrow \infty} \uparrow {\cal R}_n (C,\omega)
\end{equation}
the two-dimensional path $\, (C(\cdot), \omega(\cdot))\,$ exits the domain of \eqref{eq:D}; here we have denoted 
\begin{align}
{\cal R}_n (C,\omega)\, :=& \inf \big\{ t \ge 0 : C(t) \notin (\ell_{n}, r_{n}) ~~\text{or}~~ \omega (t) \notin \big( H_{C(t)} (\ell_{n}  ), H_{C(t)} (r_{n} )\big) \big\} \nonumber\\
=& \inf \big\{ t \ge 0 : C(t) \notin (\ell_{n}, r_{n}) \big\} \wedge \, \inf \big\{ t \ge 0 : \Theta_{C(t)} (\omega (t) ) \notin (\ell_{ n }, r_{ n }) \big\}  \nonumber \\
=&\,  {\cal S}_n (C) \wedge {\cal S}_n \big(\Theta_{C(\cdot)} (\omega (\cdot) ) \big) \label{Rn}
\end{align}
in the manner of \eqref{eq:Sn}.  Conversely, any $\mathbb F$--adapted  solution $C(\cdot)$ to the the OIE in \eqref{7} produces a solution of  the SIE~\eqref{1}  simply through the pointwise evaluation
\begin{equation}
\label{8}
X( t) \,:=\,  \Theta_{C( t)} ( W( t)   )\,, \qquad 0 \le t < {\cal R} (C,W) = \mathcal{S} (X)    \wedge \mathcal{S} (C)\,;
\end{equation}
the last equality here is obvious from \eqref{R}, \eqref{Rn} and from the definition of the process $\,X(\cdot)\,$ in \eqref{8}.

\subsection{Relating the SIE to a family of OIEs}
We are now ready to state and prove the first main result of this work.

\begin{thm}[{{\bf A Lamperti-type result}}]
\label{T1L} 
 For any given semimartingale measure $\Prob \in \mathfrak{P}\,$, the following  
hold:
\begin{enumerate}
\item[(i)]
Given any   solution $\,X(\cdot)\, $   of the stochastic integral equation \eqref{1} on the filtered probability space $\, ( \Omega, {\cal F}, \Prob),\,  \mathbb{F} =  \{ \mathcal{F}(t)  \}_{0 \le t < \infty}\, $ up until the explosion time $\, \mathcal{S} (X)\,$,   the process 
  \begin{equation}
\label{Y}
 Y ( t)\, := H_{x_0} \big(X (  t ) \big)\,,\qquad 0 \le t <  
 \mathcal{S} (X)  
 \end{equation}
 is well-defined up until its own explosion time $\,\widetilde{\cal S} (Y) =  {\cal S} (X)\, $ as in    \eqref{eq:J}, \eqref{eq:S};   and the process $\, \Gamma (\cdot) := Y(\cdot) - W(\cdot)\,$ is of finite first variation on compact intervals  and solves the OFE 
\begin{equation}
\label{2L}
\Gamma ( t) \,=\,  \mathfrak{B} \big(t,   W   , \Theta_{x_0}  ( \Gamma   + W)  \big) \,, \qquad 0 \le t < \widetilde{\mathcal S}(\Gamma+W).
\end{equation}

   \item[(ii)]
Conversely, suppose we are given an   $\mathbb{F}$--adapted process $\,\Gamma (\cdot)\,$ of finite first variation on compact intervals, defined up until the  stopping     time $\, \widetilde{\mathcal S}(\Gamma+W)\,$ and solving, for $\Prob$--almost every path $\,W(\cdot)\,$,  the OFE of \eqref{2L} up until    $ \, \widetilde{\mathcal S}(\Gamma+W)\,$.

 Then   the process $\,X(\cdot):= \Theta_{x_0} \big( \Gamma ( \cdot) + W( \cdot)   \big)\,$    is $\,\mathbb{F}$--adapted and solves the stochastic integral equation \eqref{1} on the filtered probability space $\, ( \Omega, {\cal F}, \Prob),\,  \mathbb{F}=  \{ \mathcal{F}(t) \}_{0 \le t < \infty}\, $, up until the explosion   time  $\,   \mathcal{S} (X)= \widetilde{\mathcal S}(\Gamma+W)$.  In particular, if the process $\Gamma(\cdot)\,$ is $\,\mathbb{F}^W$--adapted, so is  $X(\cdot)$.

\end{enumerate}
\end{thm}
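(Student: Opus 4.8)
The plan is to derive both implications from a single tool, the generalized It\^o--Tanaka formula applied to the strictly increasing difference-of-convex maps $H_{x_0}(\cdot)$ of \eqref{3} and its inverse $\Theta_{x_0}(\cdot)$ of \eqref{5}; by the closing observation of the previous subsection both maps are indeed differences of convex functions, so the formula applies and the nonsmoothness of $\mathfrak{s}(\cdot)$ is funnelled entirely into the local-time term. Throughout I would localize along the stop-rules $\mathcal{S}_n$ of \eqref{eq:Sn}, on whose stochastic intervals $[0,\mathcal{S}_n(X))$ the state takes values in the relatively compact $(\ell_n, r_n)$, where by Remark~\ref{1/s} both $\mathfrak{s}(\cdot)$ and $1/\mathfrak{s}(\cdot)$ are bounded, bounded away from zero, and of finite variation; the statements for the full interval then follow on letting $n \uparrow \infty$ through \eqref{eq:S}, \eqref{eq:S2}. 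Because $H_{x_0}(\cdot)$ is a homeomorphism of $\I$ onto $\J$ with $H_{x_0}(\ell_n) = \widetilde\ell_n$ and $H_{x_0}(r_n) = \widetilde r_n$ (directly from \eqref{eq:J}), the exit rules match, $\mathcal{S}_n(X) = \widetilde{\mathcal{S}}_n(Y)$, and the explosion-time identity $\widetilde{\mathcal{S}}(Y) = \mathcal{S}(X)$ in \eqref{Y} is immediate; adaptedness is equally automatic, since $\Theta_{x_0}(\cdot)$ and $H_{x_0}(\cdot)$ are deterministic continuous functions.

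For part (ii) I would begin with the given $\mathbb{F}$--adapted, finite-variation $\Gamma(\cdot)$, form the continuous semimartingale $Y(\cdot) := \Gamma(\cdot) + W(\cdot)$ --- note $\langle Y\rangle = \langle W\rangle$, as $\Gamma(\cdot)$ is of finite variation --- and put $X(\cdot) := \Theta_{x_0}(Y(\cdot))$. Applying the It\^o--Tanaka formula with $D\Theta_{x_0} = \mathfrak{s}(\Theta_{x_0}(\cdot))$ from \eqref{5} and $\Theta_{x_0}(0) = x_0$ from Remark~\ref{R:ODE} gives
\begin{equation*}
X(\cdot) = x_0 + \int_0^{\,\cdot} \mathfrak{s}(X(t))\,\dx Y(t) + \int_{\J} L^Y(\cdot\,,w)\,\dx\big(\mathfrak{s}(\Theta_{x_0}(w))\big).
\end{equation*}
The OFE \eqref{2L} identifies $\Gamma(\cdot) = \mathfrak{B}(\cdot\,,W,X)$, hence $\dx\Gamma = \dx\mathfrak{B}(\cdot\,,W,X)$ and $\dx Y = \dx W + \dx\mathfrak{B}(\cdot\,,W,X)$, so the first integral reproduces exactly the bracketed term $\int_0^{\,\cdot} \mathfrak{s}(X(t))[\,\dx W(t) + \dx\mathfrak{B}(t,W,X)\,]$ of \eqref{1}. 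It then remains only to show that the second, local-time integral equals the singular correction term of \eqref{1}. Part (i) runs the same computation in reverse. Given a solution $X(\cdot)$ of \eqref{1} --- a continuous semimartingale by Definition~\ref{Def1}(i) --- I would apply the It\^o--Tanaka formula to $Y(\cdot) = H_{x_0}(X(\cdot))$ with $DH_{x_0} = 1/\mathfrak{s}(\cdot)$ from \eqref{4}, obtaining
\begin{equation*}
Y(\cdot) = \int_0^{\,\cdot} \frac{1}{\mathfrak{s}(X(t))}\,\dx X(t) + \int_{\I} L^X(\cdot\,,\xi)\,\dx\Big(\frac{1}{\mathfrak{s}(\xi)}\Big).
\end{equation*}
Substituting the dynamics \eqref{1} for $\dx X$ and using the cancellation $\mathfrak{s}(X)\cdot(1/\mathfrak{s}(X)) \equiv 1$ turns the first integral into $W(\cdot) + \mathfrak{B}(\cdot\,,W,X)$ plus the transform of the singular term of \eqref{1}; once this transformed term is shown to cancel the local-time integral above, one reads off $Y(\cdot) - W(\cdot) = \mathfrak{B}(\cdot\,,W,X)$, whence $\Gamma := Y - W$ is of finite variation and, since $X = \Theta_{x_0}(\Gamma + W)$, solves the OFE \eqref{2L}.

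In both directions the crux --- and the step I expect to be the genuine obstacle --- is the transformation of local time under the nonlinear, nonsmooth map $\Theta_{x_0}$, which is precisely what forces the two local-time integrals to agree. The occupation-time-density property \eqref{eq: locTime} is the natural tool: since the martingale parts of $X$ and $Y$ are related by $\dx\langle X\rangle = \mathfrak{s}(X)^2\,\dx\langle Y\rangle$, testing the occupation formulas for $X$ and for $Y$ against an arbitrary function and carrying out the (locally Lipschitz) change of variables $\xi = \Theta_{x_0}(w)$, $\dx\xi = \mathfrak{s}(\Theta_{x_0}(w))\,\dx w$, yields the Lebesgue-a.e.\ density identity $L^X(\cdot\,,\xi) = \mathfrak{s}(\xi)\,L^Y(\cdot\,,H_{x_0}(\xi))$. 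The delicate point is that this a.e.\ identity is \emph{not} enough: the integral $\int_{\J} L^Y(\cdot\,,w)\,\dx(\mathfrak{s}(\Theta_{x_0}(w)))$, once pushed forward under $w \mapsto \Theta_{x_0}(w)$ to $\int_{\I} L^Y(\cdot\,,H_{x_0}(\xi))\,\dx\mathfrak{s}(\xi)$, charges the atoms of $\dx\mathfrak{s}$, namely the jump sites of $\mathfrak{s}(\cdot)$. At such a site one must pin down the correct one-sided value, and the jump-of-local-time relation recorded in Remark~\ref{insidious} shows that it is the \emph{right} limit $\mathfrak{s}(\xi+)$ which appears, so that $L^Y(\cdot\,,H_{x_0}(\xi)) = L^X(\cdot\,,\xi)/\mathfrak{s}(\xi+)$ there; the integral then becomes $\int_{\I} L^X(\cdot\,,\xi)\,\dx\mathfrak{s}(\xi)/\mathfrak{s}(\xi+)$, which by the Lebesgue--Stieltjes identity \eqref{insid} is exactly the singular correction term $-\int_{\I} L^X(\cdot\,,\xi)\,\mathfrak{s}(\xi)\,\dx(1/\mathfrak{s}(\xi))$ of \eqref{1}. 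Reconciling the left-continuous convention adopted for $D\Theta_{x_0}$ and $DH_{x_0}$ with the right-continuous version dictated by these atoms is, I expect, where the real work of the proof lies.
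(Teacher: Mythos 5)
Your proposal follows essentially the same route as the paper's proof: Itô--Tanaka applied to $H_{x_0}$ for the analysis direction and to $\Theta_{x_0}$ for the synthesis, with the crux being exactly the local-time transformation $L^{\Gamma+W}(T,y)=L^X(T,\Theta_{x_0}(y))/D^{+}\Theta_{x_0}(y)$ (right-derivative, hence $\mathfrak{s}(\xi+)$ in the denominator) followed by the Lebesgue--Stieltjes identity \eqref{insid} --- which is precisely how the paper closes the argument, citing Exercise~1.23 in Revuz--Yor for the local-time formula. The approach and the identification of the delicate point are both correct.
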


\begin{proof}
We organize the proof in two steps. The first one is an analysis, showing the statement in part~(i) of the theorem; the second one is a synthesis, proving the statement in part~(ii).

\noindent
{\sl Analysis:}  We  start by assuming  that such a solution $\,X(\cdot)\,$ to the SIE~\eqref{1}, as postulated in part~(i) of the theorem, has been constructed on the filtered probability space $\, ( \Omega, {\cal F}, \Prob),\,  \mathbb{F} =  \{ \mathcal{F} (t) \}_{0 \le t < \infty}\,$ for the given semimartingale measure $\, \Prob \in \mathfrak{P}\,$, up until the explosion time $\, {\cal S} (X)$. Then the process $Y(\cdot) = H_{x_0} \big( X(\cdot) \big)\,$ of \eqref{Y} is well-defined up until the explosion time 
\begin{equation*}
	 {\cal S} (X)  = \widetilde{\cal S} (\Gamma +W)=  \widetilde{\cal S} (Y)\,,
\end{equation*}
as follows   from the definition of the function $H_{x_0}(\cdot)$ in \eqref{3}.  Finally, we recall that $H_{x_0}(x_0)=0$ holds and that the function $1/\mathfrak{s}(\cdot)$  is of finite first variation on compact subsets of $\,\I\,$ (recall the discussion in Remark~\ref{1/s}). Thus   $\, H_{x_0}(\cdot)\,$ is the difference of two convex functions and the {It\^o--Tanaka} rule  (e.g., Theorem 3.7.1 in Karatzas and Shreve (1991)) gives 
$$
Y(T) \,= \,\int_0^{T}   D H_{x_0}  ( X(t) )\, \dx X(t) +     \int_\I   \,L^X (T,  \xi )    \, \dx \, D H_{x_0} (  \xi) \qquad \text{on}~~ \{ \mathcal{S}(X)>T\}
$$
for all $\, T \in [0, \infty)$.  Now it is rather clear from \eqref{1} and  \eqref{4}  that the first of these integral terms is 
\begin{align*}
\int_0^{T} { 1 \over \,  \mathfrak{s} (  X(t) ) \,} \cdot \mathfrak{s} (  X(t))  \left[ \,\dx W(t) + \dx \mathfrak{B} (t, W,X) \right] & - \int_0^{T} { 1 \over \,  \mathfrak{s} (  X(t) ) \,} \int_\I   \, L^X (\dx t , \xi) \, \mathfrak{s}  (\xi) \, \dx  \frac{1}{\mathfrak{s}(\xi) } \\
& =\, W(T) +  \mathfrak{B} ( T, W,X ) - \int_\I L^X (T,  \xi ) \,\dx \frac{1}{\mathfrak{s}(\xi) } \,;  
\end{align*}
 and that the second integral term  is  
$\,
    \int_\I L^X (T,  \xi ) \,\dx \big(  1 / \mathfrak{s}(\xi) \big)\,$.  Combining these two terms we obtain  
 \begin{align*}
 	\Gamma(T) = Y(T) - W(T) = \mathfrak{B} ( T, W,X ) = \mathfrak{B} \big( T, W, \Theta_{x_0}(Y) \big) = \mathfrak{B} \big( T, W, \Theta_{x_0}\big(\Gamma + W\big) \big) ,
 \end{align*}
which yields all the claims in part~(i) of the theorem.

\noindent
{\sl Synthesis:} We place ourselves   on the filtered probability space $\, ( \Omega, {\cal F}, \Prob),\,  \mathbb{F}
=  \{ \mathcal{F}
(t) \}_{0 \le t < \infty}\,$ for the given semimartingale measure $\, \Prob \in \mathfrak{P}\,$. As postulated in part (ii) of the theorem, we assume  that the OFE~\eqref{2L} has an $\mathbb{F}
$--adapted  solution $ \Gamma(\cdot)$  up until the stopping time $\widetilde{\cal S} (\Gamma +W)$. 
With the process $X(\cdot) = \Theta_{x_0} ( \Gamma ( \cdot) + W( \cdot)  )$, it is clear  that 
$	 {\cal S} (X) =\widetilde{\cal S} (\Gamma +W)$,  
and the {It\^o--Tanaka} rule   gives      now  
\begin{equation}
\label{ItoTan}
X(T) \,= \,x_0 + \int_0^{T}  D \Theta_{x_0}  \big(\Gamma(t) +  W(t) \big)\, \dx \big(\Gamma(t) + W(t)\big)  +   \int_\J   L^{\Gamma+W} (  T ,    y )\,  \dx \,D \Theta_{x_0} (  y)
\end{equation}
on $\, \{ \mathcal{S}(X) >T \}\,$, for all $\, T \in (0, \infty)\,$. On the strength of \eqref{5}, the first integral in this expression is
$$
\int_0^{T} \mathfrak{s} \big( \Theta_{x_0} (\Gamma(t) +  W(t)) \big)\,  \dx \big[   \mathfrak{B} ( t, W,X )  +  W(t)\big]\,=\, \int_0^{T} \mathfrak{s} \big( X(t)  \big)\,  \dx \big[   \mathfrak{B} ( t, W,X )  +  W(t)\big]\,.
$$
As for   the second integral in \eqref{ItoTan}, we recall   the property 
$$
L^{\Gamma + W } (T, y) = { 1 \over \,D^+ \Theta_{x_0}  (y)\,}\, L^X \big(  T, \Theta_{x_0} (y) \big)  
$$
from Exercise~1.23 on page~234 in \citet{RY}; we denote here by $\,D^+ \Theta_{x_0} (\cdot)\,$ the {\it right}-derivative of the function  $\,  \Theta_{x_0} (\cdot)\,$, namely  $\,D^+ \Theta_{x_0} (y) = \mathfrak{s} \big( \Theta_{x_0} (y) + \big)   $ from \eqref{5}. These considerations  allow  us to cast the second integral in \eqref{ItoTan} as 
 \begin{align*}
 \int_\J L^{\Gamma+W} (  T, y )\, \dx \, D \Theta_{x_0} (   y) &=   \int_\J L^X \big(  T, \Theta_{x_0} (y) \big)\,   { \,\dx \, D\Theta_{x_0}  (  y)\, \over \, D^+ \Theta_{x_0}  (y)\,}   =  \int_\J L^X \big(  T, \Theta_{x_0} (y) \big)\, {\, \dx \,\mathfrak{s} (\Theta_{x_0} (y)  )\, \over \, \mathfrak{s} (\Theta_{x_0} (y) +)\,} \\
&= \int_\I L^X (  T, \xi )\, {\, \dx \,\mathfrak{s}  (\xi  )\, \over \, \mathfrak{s}  (\xi  +)\,}\,=\, - \int_\I L^X (T , \xi) \, \mathfrak{s}  (\xi)\, \dx \frac{1}{  \mathfrak{s}  (\xi) } \,,
\end{align*}
where the last equality follows from \eqref{insid}.  All in all, we conclude    that the process $\, X(\cdot)\, $ solves the SIE~\eqref{1} on the stochastic interval $[0,\mathcal{S}(X)) $. This completes the proof of part~(ii) of the theorem.
\end{proof}

\begin{rem}[Possible generalizations of Theorem~\ref{T1L}]  We have assumed that the function $\log \mathfrak{s}(\cdot)$ is of finite first variation on compact subsets of $\,\I\,$. The   question arises: How much of the pathwise approach of Theorem~\ref{T1L} goes through,   if we only   assume  that the function $1/\mathfrak{s}(\cdot)$ is  simply integrable  on compact subsets of $\,\I\,$?  
As a first observation, the two functions $H_c(\cdot)$ and $\Theta_c(\cdot)$  will then not be   expressible    necessarily     as   differences of two convex functions; they will only be absolutely continuous with respect to Lebesgue measure.  Therefore, by the arguments in \citet{Cinlar1980}, we cannot expect then the continuous process $\,\Theta_{x_0}(\Gamma(\cdot) + W(\cdot))$ to be a semimartingale. 

We also note that the second integral term in \eqref{1} is not defined if the function $\log \mathfrak{s}(\cdot)$ is not of finite first variation. However, we might formally apply integration-by-parts to that integral and obtain an integral of Bouleau--Yor type, derived in \citet{Bouleau1981}; see, for example, \citet{Ghomrasni}.  The computations in \citet{Wolf_transformations} then will let us expect that $\Theta_{x_0}(\Gamma(\cdot) + W(\cdot))$ is a \emph{local Dirichlet} process with a zero-quadratic variation term of Bouleau--Yor  type. Dirichlet processes were introduced in \citet{F1981Dirichlet} and were studied by   \citet{Bertoin1986, Bertoin1987},  \citet{Fukushima1994}, among many others.  For stochastic differential equations involving Dirichlet processes, we refer to \citet{Engelbert_Wolf},  \citet{FlandoliRussoWolf1, FlandoliRussoWolf2} and \citet{Coviello2007}.

The proof of Theorem~\ref{T1L} relies on the It\^o--Tanaka formula.  Much work  has been done  on obtaining more general change-of-variable formulas that can accommodate Dirichlet processes as inputs and/or outputs. Here, we refer to \citet{Wolf_ito}, \citet{Dupoiron}, \citet{Bardina2007}, \citet{Lowther2010}, and the many references therein. To the best of our knowledge, it is an open problem to connect these techniques and generalize our approach here to the situation when the function $1/\mathfrak{s}(\cdot)$  is  only integrable on compact subsets of $\,\I$. 
A related open problem is to generalize the class of input processes $W(\cdot)$ from the class of all semimartingales   
to the larger class of all local Dirichlet processes. A first step in this direction, for smooth coefficients, was made in \citet{Errami2002}, and for so-called \emph{weak Dirichlet processes}  in    \citet{Errami2003}.
\qed
\end{rem}

\begin{cor}
\label{cor2}
	We fix a semimartingale measure $\,\Prob \in \mathfrak{P}\,$. The SIE~\eqref{1} has at most one solution, if and only if the OFE~\eqref{7L}  has at most one $\mathbb{F}$--adapted solution for $\Prob$--almost every path $W(\cdot)$.   
	Furthermore, the SIE~\eqref{1} has a  solution, if and only if the OFE~\eqref{7L}  has an $\mathbb{F}$--adapted solution for $\Prob$--almost every path $W(\cdot)$.
\end{cor}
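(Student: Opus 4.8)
The plan is to read off both equivalences directly from the bijective correspondence furnished by Theorem~\ref{T1L}, so that essentially nothing beyond it is needed. First I would introduce the two pathwise transformations
\[
\Phi \colon X(\cdot) \longmapsto \Gamma(\cdot) := H_{x_0}\big(X(\cdot)\big) - W(\cdot), \qquad
\Psi \colon \Gamma(\cdot) \longmapsto X(\cdot) := \Theta_{x_0}\big(\Gamma(\cdot) + W(\cdot)\big),
\]
each defined up until the matching explosion times $\,\mathcal{S}(X) = \widetilde{\mathcal{S}}(\Gamma+W)\,$ of \eqref{Y}, \eqref{8L}. Part~(i) of Theorem~\ref{T1L} guarantees that $\Phi$ sends any solution $X(\cdot)$ of the SIE~\eqref{1} to an $\mathbb{F}$--adapted, finite-variation solution $\Gamma(\cdot)$ of the OFE~\eqref{7L}, while part~(ii) guarantees that $\Psi$ sends any such solution $\Gamma(\cdot)$ of the OFE back to a solution $X(\cdot)$ of the SIE. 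Since $\Theta_{x_0}$ is by construction the inverse of $H_{x_0}$, so that $\Theta_{x_0}(H_{x_0}(x)) = x$ and $H_{x_0}(\Theta_{x_0}(w)) = w$ hold on the respective domains, the identities $\Psi\circ\Phi = \mathrm{id}$ and $\Phi\circ\Psi = \mathrm{id}$ are immediate, since $\Theta_{x_0}\big((H_{x_0}(X) - W) + W\big) = X$ and $H_{x_0}\big(\Theta_{x_0}(\Gamma+W)\big) - W = \Gamma$. Thus $\Phi$ and $\Psi$ are mutually inverse bijections between the solutions of the SIE~\eqref{1} and the $\mathbb{F}$--adapted solutions of the OFE~\eqref{7L}.

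Both stated equivalences are then the elementary consequence that a bijection preserves non-emptiness and ``cardinality at most one.'' The existence equivalence is immediate: a solution $X(\cdot)$ of the SIE yields the solution $\Phi(X)$ of the OFE, and a solution $\Gamma(\cdot)$ of the OFE yields the solution $\Psi(\Gamma)$ of the SIE. For uniqueness I would argue in the contrapositive in each direction: if the OFE admitted two solutions $\Gamma_1 \neq \Gamma_2$, their images $\Psi(\Gamma_1), \Psi(\Gamma_2)$ would be distinct SIE solutions, distinctness being preserved because $\Theta_{x_0}$ is strictly increasing, hence injective; symmetrically, two distinct SIE solutions would yield two distinct OFE solutions through $\Phi$, using the injectivity of $H_{x_0}$.

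The one point that requires care---and the step I expect to be the main obstacle---is the bookkeeping between the process-level formulation of the SIE (where ``at most one solution'' means $\Prob$--indistinguishability of two solution processes, and ``a solution'' means an $\mathbb{F}$--semimartingale) and the path-indexed formulation of the OFE (solved, for $\Prob$--almost every fixed $\omega$, as an ordinary functional equation). The resolution is that both $H_{x_0}$ and $\Theta_{x_0}$ are deterministic measurable functions acting pointwise in $(t,\omega)$, so the correspondence $X(t,\omega) \leftrightarrow \Gamma(t,\omega)$ is itself pathwise; consequently it commutes with the $\Prob$--almost-everywhere qualifier, preserves $\mathbb{F}$--adaptedness in both directions (because $W(\cdot)$ is adapted and the transformations are non-anticipative), and matches the finite-first-variation requirement on $\Gamma(\cdot)$ with the semimartingale and finite-variation conditions of Definition~\ref{Def1} imposed on $X(\cdot)$. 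Once this identification is made explicit, the corollary follows.
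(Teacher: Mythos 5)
Your proposal is correct and is essentially the argument the paper intends: the corollary is stated without a separate proof precisely because it is the immediate consequence of the mutually inverse correspondence $X \mapsto H_{x_0}(X)-W$ and $\Gamma \mapsto \Theta_{x_0}(\Gamma+W)$ established in parts (i) and (ii) of Theorem~\ref{T1L}. Your explicit bookkeeping of adaptedness, the almost-sure qualifier, and the matching explosion times fills in exactly the details the paper leaves implicit.
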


\begin{cor} 
\label{cor3}  
	We   assume that  there exists an $\mathbb{F}^W$--adapted process $\, \Gamma (\cdot)\,$, defined up until the stopping  time $\widetilde{\mathcal{S}} (\Gamma +W)\,$ of  \eqref{eq:S}, of finite first variation on compact subintervals of $\,\big[0,\widetilde{\mathcal{S}} (\Gamma +W)\big)\,$  and solving the OFE~\eqref{7L} for each $\omega  \in \Omega$. 
Then Theorem~\ref{T1L} guarantees that the SIE~\eqref{1} is pathwise solvable in the sense of Definition \ref{Def2}, with $\, \mathfrak{X} ( t , \omega) = \Theta_{x_0}  ( \Gamma (  t) + \omega (t)  ) \, $ for all $( t, \omega) \in [0, \infty) \times \Omega$. 
\end{cor}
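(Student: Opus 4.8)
The plan is to take the candidate functional $\mathfrak{X}(t,\omega) := \Theta_{x_0}\big(\Gamma(t)+\omega(t)\big)$ prescribed in the statement, extended by $\mathfrak{X}(t,\omega):=\Delta$ for $t\ge \widetilde{\mathcal{S}}(\Gamma+\omega)$ so that it is $\overline{\I}$-valued, and then to verify the two requirements of Definition~\ref{Def2} directly. Requirement~(ii) holds by construction, since $\mathfrak{X}$ is produced by solving, for each fixed input path $\omega\in\Omega$, the OFE~\eqref{7L}. The substance of the argument therefore lies in requirement~(i): showing that $\mathfrak{X}$ is progressively measurable, and that $X(\cdot):=\mathfrak{X}(\cdot,W)$ solves the SIE~\eqref{1} under \emph{every} semimartingale measure $\Prob\in\mathfrak{P}$.

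For the measurability, I would first note that any solution $\Gamma$ of \eqref{7L} is continuous on $[0,\widetilde{\mathcal{S}}(\Gamma+\omega))$, since it coincides there with its right-hand side $\mathfrak{B}(\cdot,\omega,\Theta_{x_0}(\Gamma+\omega))$, which is continuous in time (as is required for \eqref{1} to be meaningful, cf.\ Definition~\ref{Def1}(ii)). Being continuous and $\mathbb{F}^W$-adapted, $\Gamma$ is $\mathbb{F}^W$-progressively measurable. The function $H_{x_0}(\cdot)$ of \eqref{3} is continuous and strictly increasing---its integrand $1/\mathfrak{s}(\cdot)$ being bounded on compact subsets of $\I$ by \eqref{LI_new}---so its inverse $\Theta_{x_0}(\cdot)$ is continuous on $\mathcal{J}$. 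Hence the composition $(t,\omega)\mapsto \Theta_{x_0}\big(\Gamma(t)+\omega(t)\big)$ is progressively measurable, and since $\widetilde{\mathcal{S}}(\Gamma+\cdot)$ is an $\mathbb{F}^W$-stopping time, absorbing the functional at the cemetery $\Delta$ from that time onward preserves progressive measurability. This establishes that $\mathfrak{X}$ is a progressively measurable, $\overline{\I}$-valued functional.

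For the solution property I would invoke Theorem~\ref{T1L}(ii) once for each $\Prob\in\mathfrak{P}$. Fixing such a $\Prob$, the hypothesis supplies an $\mathbb{F}^W$-adapted (a fortiori $\mathbb{F}$-adapted, as $\mathcal{F}^W(t)\subseteq\mathcal{F}(t)$) process $\Gamma(\cdot)$ of finite first variation that solves \eqref{7L}, hence the identical OFE~\eqref{2L}, for every $\omega$ and in particular for $\Prob$-almost every $W(\cdot)$. Part~(ii) of Theorem~\ref{T1L} then yields that $X(\cdot)=\Theta_{x_0}(\Gamma(\cdot)+W(\cdot))=\mathfrak{X}(\cdot,W)$ solves \eqref{1} on $(\Omega,\mathcal{F},\Prob)$ up until $\mathcal{S}(X)=\widetilde{\mathcal{S}}(\Gamma+W)$, and that $X(\cdot)$ is $\mathbb{F}^W$-adapted because $\Gamma(\cdot)$ is. As $\Prob\in\mathfrak{P}$ was arbitrary, the single functional $\mathfrak{X}$ solves \eqref{1} under every semimartingale measure, which completes requirement~(i).

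The step I expect to demand the most care is the progressive-measurability verification: one must confirm that the $\mathbb{F}^W$-adapted, continuous, finite-variation process $\Gamma$, composed with the continuous monotone map $\Theta_{x_0}$ and then stopped and sent to $\Delta$ at the explosion level $\widetilde{\mathcal{S}}(\Gamma+\omega)$, genuinely defines a progressively measurable functional on $[0,\infty)\times C([0,\infty);\R)$ valued in $\overline{\I}$. Everything else reduces to a measure-by-measure application of Theorem~\ref{T1L}(ii); the only genuinely new ingredient is that the hypothesis is imposed \emph{pathwise}---for every $\omega$, rather than merely $\Prob$-almost surely---so that one and the same functional $\mathfrak{X}$ serves all $\Prob\in\mathfrak{P}$ simultaneously, which is precisely the notion of pathwise solvability in Definition~\ref{Def2}.
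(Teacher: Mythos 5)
Your proposal is correct and follows the same route the paper intends: the corollary is a direct application of Theorem~\ref{T1L}(ii), applied measure-by-measure, with the key observation that the pathwise (for every $\omega$) hypothesis on $\Gamma$ yields a single functional $\mathfrak{X}$ valid under all $\Prob \in \mathfrak{P}$ simultaneously. The paper leaves this proof implicit, so your added care with the progressive measurability of $\,(t,\omega)\mapsto \Theta_{x_0}(\Gamma(t)+\omega(t))\,$ and the absorption at $\Delta$ is a welcome, but not divergent, elaboration.
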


\begin{rem} [{{\it Pathwise Stochastic Integration}}] 
In the setting of Corollary \ref{cor3},  the SIE~\eqref{1} can   be cast on the strength of   Theorem~\ref{T1L} as 
\begin{align}
\int_0^{T} \mathfrak{s} \big(\Theta_{x_0} ( \Gamma (t)+ W( t))\big)\, & \dx W(t)   =  \,\Theta_{x_0} \big( \Gamma (T)+W( T) \big)  - x_0 -\int_0^{T} \mathfrak{s} \big(   \Theta_{x_0} ( \Gamma (t)+ W( t)   )\big)   \dx \Gamma(t)  \nonumber\\
&+ \int_{\I} L^{\Theta_{x_0} ( \Gamma  +W    )} (T , \xi)\,\mathfrak{s}  (\xi)\, \dx \, \frac{1}{\mathfrak{s}  (\xi) }
\,, \qquad 0 \le T <\widetilde{\mathcal{S}} (\Gamma +W).
\label{eq:1SIL}
\end{align}
We note that the right-hand side of \eqref{eq:1SIL} is  defined path-by-path, and is an $\mathbb{F}^W$--adapted process. Moreover, these equalities hold  under any semimartingale measure $\Prob \in \mathfrak{P}\,$ (at least  $\Prob$--almost surely, as stochastic integrals are defined only thus). Consequently, 
the identification \eqref{eq:1SIL}  corresponds to a pathwise definition of the stochastic integral on its left-hand side. This construction yields a version of the stochastic integral that is not only $\mathbb{F}-$adapted but also $\mathbb{F}^W$--adapted.
We   refer to \citet{F1981}, \citet{Bichteler_1981}, 
\citet{Karandikar1995}, \citet{STZ_aggregation},   \citet{Nutz2012}, and \citet{PerkowskiPromel} for   general results on pathwise  stochastic integration.
\qed
\end{rem}

\begin{cor}[{\bf A Doss-Sussmann-type result}]
\label{T1}
For any given semimartingale measure $\,\Prob \in \mathfrak{P}$, the following   hold:
\begin{enumerate}
\item[(i)]
Given any   solution $\,X(\cdot)\, $   of the stochastic integral equation \eqref{1} on the filtered probability space $\, ( \Omega, {\cal F}, \Prob),\,  \mathbb{F} =  \{ \mathcal{F}(t)  \}_{0 \le t < \infty}\, $ up until the explosion time $\, \mathcal{S} (X)\,$,   the process 
  \begin{equation*}
 C( t)\, := \Theta_{x_0} \big( \mathfrak{B} (  t\,,W,X) \big)\,,\qquad 0 \le t <  
\mathcal{R} (C,W)= 
\mathcal{S}(X)\wedge \mathcal{S} (C)
 \end{equation*}
 is well-defined,   and is the unique     $\mathbb{F}$--adapted solution of the ordinary integral equation   
\begin{equation}
\label{2}
C(  t) \,=\, x_0 + \int_0^{\, t}  \mathfrak{s} ( C(u) ) \, \dx  \mathfrak{B} (u, W,X    )\,,\qquad 0 \le t <  \mathcal{R} (C,W)= 
\mathcal{S}(X)\wedge \mathcal{S} (C).
\end{equation}
 Moreover, with this definition of the process $C(\cdot)$  we have once again 
\eqref{8}, namely 
$$\,
X( t)= \Theta_{C( t)} \big( W(  t)   \big) =    \Theta_{x_0} \big( \mathfrak{B} (   t\,,W,X) + W( t) \big)
 \,,\qquad 0 \le t < \mathcal{R} (C,W)= 
\mathcal{S}(X)\wedge \mathcal{S} (C) ,
$$
as well as    the ordinary integral equation \eqref{7} for $\Prob$--almost each path $\,W(\cdot)\,$.

 \item[(ii)]
Conversely, suppose we are given an  $\mathbb{F}-$adapted process $\,C(\cdot)\,$ defined up until the explosion time $\, \mathcal{S} (C)\,$ as in \eqref{eq:S},   and solving the ordinary integral equation \eqref{7} for $\Prob-$almost every path $\,W(\cdot)\,$ up until the stopping time $\, \mathcal{R} (C,W)\,$ of \eqref{R}.

Then   the process $\,X(\cdot)= \Theta_{C( \cdot)} ( W( \cdot)   )\,$  of \eqref{8} is $\,\mathbb{F}-$adapted and solves the stochastic integral equation \eqref{1} on the filtered probability space $\, ( \Omega, {\cal F}, \Prob),\,  \mathbb{F}=  \{ \mathcal{F}(t)  \}_{0 \le t < \infty}\, $, up until the stopping  time  $\,     \mathcal{S} (C) \wedge 
\mathcal{S} (X)= \mathcal{R} (C,W)\,$.  In particular, if the process $\,C(\cdot)\,$ is $\,\mathbb{F}^W$--adapted, so is  $X(\cdot)$.
   
\end{enumerate}
\end{cor}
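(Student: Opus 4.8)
The plan is to deduce this Doss--Sussmann statement directly from the Lamperti-type Theorem~\ref{T1L}, using the composition property \eqref{compo} to pass between the additive description $\Theta_{x_0}(\Gamma+W)$ and the multiplicative description $\Theta_{C}(W)$, and the elementary chain rule for finite-variation integrators to pass between the functional equations \eqref{7L}, \eqref{2L} and the integral equations \eqref{7}, \eqref{2}. The bridge in both directions is the identification $C(\cdot)=\Theta_{x_0}(\Gamma(\cdot))$, equivalently $\Gamma(\cdot)=H_{x_0}(C(\cdot))$, since $\Theta_{x_0}$ and $H_{x_0}$ are mutually inverse. With this identification, \eqref{compo} taken at $c=x_0$, $\gamma=\Gamma(t)$, $w=W(t)$ gives at once $\Theta_{C(t)}(W(t))=\Theta_{\Theta_{x_0}(\Gamma(t))}(W(t))=\Theta_{x_0}(\Gamma(t)+W(t))=X(t)$, which is precisely the representation \eqref{8}.

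For part~(i), I would start from a solution $X(\cdot)$ of \eqref{1} and invoke Theorem~\ref{T1L}(i): the process $\Gamma(\cdot)=H_{x_0}(X(\cdot))-W(\cdot)$ is of finite variation and satisfies $\Gamma(t)=\mathfrak{B}(t,W,X)$, because $\Theta_{x_0}(\Gamma+W)=\Theta_{x_0}(Y)=X$. Hence $C(t):=\Theta_{x_0}(\mathfrak{B}(t,W,X))=\Theta_{x_0}(\Gamma(t))$ is well defined, and the representation $X=\Theta_{C}(W)$ follows from the display above. To obtain \eqref{2}, I would apply the It\^o--Tanaka rule to the difference-of-convex function $\Theta_{x_0}$ along the finite-variation path $\Gamma(\cdot)$: since $\langle\Gamma\rangle\equiv0$ there is neither a local-time nor a second-order term, so $C(t)=\Theta_{x_0}(\Gamma(t))=x_0+\int_0^t D\Theta_{x_0}(\Gamma(u))\,\dx\Gamma(u)=x_0+\int_0^t \mathfrak{s}(C(u))\,\dx\mathfrak{B}(u,W,X)$ by \eqref{5} and $\Gamma=\mathfrak{B}(\cdot,W,X)$; equation \eqref{7} is then the same identity rewritten through $X=\Theta_{C}(W)$. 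Uniqueness of the $\mathbb{F}$-adapted solution of \eqref{2} repeats Remark~\ref{R:ODE}: any solution $\widetilde{C}$ satisfies $H_{x_0}(\widetilde{C}(t))=\int_0^t \mathfrak{s}(\widetilde{C}(u))^{-1}\,\dx\widetilde{C}(u)=\mathfrak{B}(t,W,X)$, so $\widetilde{C}=\Theta_{x_0}(\mathfrak{B}(\cdot,W,X))=C$.

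For the converse part~(ii), I would run the same computation backwards. Given an $\mathbb{F}$-adapted $C(\cdot)$ solving \eqref{7}, set $\Gamma(\cdot):=H_{x_0}(C(\cdot))$; this is $\mathbb{F}$-adapted, and of finite variation because $H_{x_0}$ is locally Lipschitz (being a difference of convex functions on the open interval $\I$) and $C(\cdot)$ is continuous of finite variation, the latter since \eqref{7} exhibits $C$ as a Lebesgue--Stieltjes integral of the locally bounded integrand $\mathfrak{s}(C(\cdot))$ against the finite-variation map $\mathfrak{B}(\cdot,W,\Theta_{C(\cdot)}(W(\cdot)))$. Applying the It\^o--Tanaka rule to $H_{x_0}$ along $C(\cdot)$ and using \eqref{4} together with \eqref{7} gives $\Gamma(t)=H_{x_0}(C(t))=\int_0^t \mathfrak{s}(C(u))^{-1}\,\dx C(u)=\mathfrak{B}(t,W,\Theta_{C(\cdot)}(W(\cdot)))$. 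Since the composition identity yields $X(\cdot):=\Theta_{C(\cdot)}(W(\cdot))=\Theta_{x_0}(\Gamma(\cdot)+W(\cdot))$, the right-hand side equals $\mathfrak{B}(t,W,X)=\mathfrak{B}(t,W,\Theta_{x_0}(\Gamma+W))$, so $\Gamma(\cdot)$ solves the OFE~\eqref{2L}. Theorem~\ref{T1L}(ii) then delivers that $X(\cdot)$ is $\mathbb{F}$-adapted and solves \eqref{1}, with $\mathbb{F}^W$-adaptedness inherited from that of $C(\cdot)$.

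The main obstacle is less in the algebra than in the justification of the two chain-rule steps and in reconciling the various horizons. Because $\Theta_{x_0}$ and $H_{x_0}$ are only differences of convex functions, one must argue carefully that composing them with the finite-variation processes $\Gamma(\cdot)$ and $C(\cdot)$ produces no local-time contribution---this is exactly where finiteness of variation, hence vanishing quadratic variation, is essential---and that the left-continuous derivative conventions adopted after \eqref{4} and \eqref{5} are used consistently, so that $D\Theta_{x_0}(\Gamma)\,\dx\Gamma$ integrates correctly against the Lebesgue--Stieltjes measure of $\Gamma$. Finally, I would check the bookkeeping of the stopping times, verifying $\mathcal{S}(X)=\widetilde{\mathcal S}(\Gamma+W)$ from Theorem~\ref{T1L} and $\mathcal{R}(C,W)=\mathcal{S}(C)\wedge\mathcal{S}(X)$ from \eqref{R}, \eqref{Rn} and \eqref{8}, so that each assertion is stated on the correct stochastic interval.
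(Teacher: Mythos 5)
Your proposal is correct and follows essentially the same route as the paper's own proof: both reduce the statement to Theorem~\ref{T1L} via the identification $C(\cdot)=\Theta_{x_0}(\Gamma(\cdot))$, i.e.\ $\Gamma(\cdot)=H_{x_0}(C(\cdot))$, use the composition property \eqref{compo} to pass between $\Theta_{C}(W)$ and $\Theta_{x_0}(\Gamma+W)$, and handle uniqueness of \eqref{2} by the argument of Remark~\ref{R:ODE}. The only difference is one of detail: you make explicit the chain-rule/It\^o--Tanaka step (with vanishing local time along finite-variation paths) that the paper compresses into ``the second equality follows from \eqref{4} and \eqref{7}.''
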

\begin{proof}
	The corollary is a direct consequence of Theorem~\ref{T1L}. More precisely, to establish part~(ii), we define $\Gamma (\cdot) := H_{x_0}(C(\cdot))$ and note that $\Gamma(0) = 0$ and
\begin{align*}
	  \dx \Gamma ( t)\, &= \,\dx H_{x_0} \big(C( t)\big) =  \dx  \mathfrak{B}\left( t \, , W (\cdot), \Theta_{C(\cdot)} \big(W(\cdot)\big)\right) =   \dx \mathfrak{B}\left( t \, , W (\cdot), \Theta_{\Theta_{x_0}(\Gamma(\cdot))}\big(W(\cdot)\big)\right)\\
	     &=\,  \dx  \mathfrak{B}\big( t \, , W (\cdot), \Theta_{x_0}\big(\Gamma(\cdot)+W(\cdot)\big)\big), \qquad 0 \leq t < \cal R(C,W)
\end{align*}
Here the second equality follows from \eqref{4} and \eqref{7},  and the last equality   from the composition property in \eqref{compo}. Therefore,  the process $\Gamma(\cdot)$ satisfies the OFE in \eqref{2L} .
 Moreover, we note   
 \begin{align*}
 	X( \cdot) = \Theta_{C(\cdot)} \big(W( \cdot)\big) =  \Theta_{\Theta_{x_0}(\Gamma(\cdot))}\big(W( \cdot)\big) =  \Theta_{x_0} \big( \Gamma( \cdot) + W( \cdot) \big),
 \end{align*}
  by the composition property \eqref{compo}, so     Theorem~\ref{T1L}(ii) applies. The identity $\,\mathcal S(C) \wedge \mathcal S(X) = \mathcal R(C,W)$   is clear from \eqref{R}, \eqref{Rn}, as already noted in Subsection~\ref{SS:preview}. 

For the statement in part~(i) of the corollary, we appeal to part~(i) in Theorem~\ref{T1L} and to the notation introduced there, and obtain the representations 
\begin{align*}
	C( t) &= \Theta_{x_0}\big(\mathfrak{B}\left( t \,, W, X\right)\big)
 = \Theta_{x_0}\big(\mathfrak{B}\big( t \,, W  , \Theta_{x_0}\big(\Gamma +W \big)\big)\big) = \Theta_{x_0}\big(\Gamma( t)\big), \qquad 0 \leq t < {\cal R} (C,W);\\
\Theta_{C( t)} \big( W( t) \big) &= \Theta_{\Theta_{x_0} (\Gamma ( t))} \big( W(t) \big) \,=\,\Theta_{x_0} \big(\Gamma ( t) + W( t) \big) \,=\,   X( t), \qquad 0 \leq t < {\cal R} (C,W),
\end{align*}
the latter  on the strength of the composition property \eqref{compo}. These representations lead to the claims in part~(i) of the corollary; the claimed uniqueness for the OIE~\eqref{2} is argued as in Remark~\ref{R:ODE}.
\end{proof}

\begin{cor}[{\bf Barrow--Osgood conditions}]
\label{C:BO} We fix a semimartingale measure $\,\Prob \in \mathfrak{P}\,$
 and impose    the  Barrow--Osgood   conditions 
\begin{equation}
\label{Osg}
H_{x_0} (\ell +) =\, - \infty\,,\qquad  H_{x_0} (r -) =\,\infty \,.
\end{equation}
   Then, in the notation of Theorem~\ref{T1L} and Corollary~\ref{T1}, $\,\mathcal{R}(C,W) = \mathcal{S}(X)\,$ holds $\,\Prob$--almost surely.  Moreover, we have
\begin{align*}
	\big\{\mathcal{S}(X) = \infty\big\} \, = \,\big\{  [0, \infty) \ni t \,\longmapsto \,\mathfrak{B} \big(t, W, X\big) \text{ is real-valued}\,\big\}\,, \quad \text{mod.}~\Prob\,.
\end{align*}
\end{cor}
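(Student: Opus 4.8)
The plan is to reduce both assertions to the explosion behaviour of the finite-variation process $\Gamma(\cdot) := \mathfrak{B}(\cdot\,, W, X)$ furnished by Theorem~\ref{T1L}. Under the Barrow--Osgood conditions \eqref{Osg} we have $\widetilde{\ell} = H_{x_0}(\ell+) = -\infty$ and $\widetilde{r} = H_{x_0}(r-) = +\infty$ in the notation of \eqref{eq:J}, so that $\J = (\widetilde{\ell}, \widetilde{r}) = \R$ and $\Theta_{x_0}\colon \R \to \I$ is a strictly increasing homeomorphism carrying $-\infty$ and $+\infty$ to the endpoints $\ell$ and $r$, respectively. By Theorem~\ref{T1L} and the discussion in Subsection~\ref{SS:preview} we may write $X(\cdot) = \Theta_{x_0}(\Gamma(\cdot) + W(\cdot))$ and $C(\cdot) = \Theta_{x_0}(\Gamma(\cdot))$, with the identifications $\mathcal{S}(X) = \widetilde{\mathcal{S}}(\Gamma + W)$ and $\mathcal{S}(C) = \widetilde{\mathcal{S}}(\Gamma)$ of the corresponding explosion times, as well as $\mathcal{R}(C,W) = \mathcal{S}(X) \wedge \mathcal{S}(C)$ from \eqref{8}.

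The decisive step is to show that $\widetilde{\mathcal{S}}(\Gamma + W) = \widetilde{\mathcal{S}}(\Gamma)$ holds $\Prob$--almost surely. Since $\widetilde{\ell}_n \downarrow -\infty$ and $\widetilde{r}_n \uparrow +\infty$, the stop-rule $\widetilde{\mathcal{S}}(\mathrm{y})$ of \eqref{eq:S2} is nothing but the first time a continuous path $\mathrm{y}$ diverges to $\pm\infty$; that is, $\widetilde{\mathcal{S}}(\mathrm{y}) = \inf\{t \ge 0 : \limsup_{s \uparrow t}|\mathrm{y}(s)| = \infty\}$. Each path $W(\cdot) = \omega(\cdot)$ is continuous, hence bounded on every compact time-interval. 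Consequently, near any finite candidate blow-up time the bounded quantity $W$ can neither create nor cancel a divergence of $\Gamma$: one has $\limsup_{s \uparrow t}|\Gamma(s) + W(s)| = \infty$ if and only if $\limsup_{s \uparrow t}|\Gamma(s)| = \infty$, so the two blow-up times coincide (both finite and equal, or both infinite). This yields $\mathcal{S}(X) = \mathcal{S}(C)$ modulo $\Prob$, and therefore $\mathcal{R}(C,W) = \mathcal{S}(X) \wedge \mathcal{S}(C) = \mathcal{S}(X)$, which is the first assertion.

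For the second assertion I would note that, by the identity just established, $\{\mathcal{S}(X) = \infty\} = \{\widetilde{\mathcal{S}}(\Gamma) = \infty\}$ modulo $\Prob$. By the definition of $\widetilde{\mathcal{S}}$ together with $\widetilde{\ell}_n \downarrow -\infty$ and $\widetilde{r}_n \uparrow +\infty$, the event $\{\widetilde{\mathcal{S}}(\Gamma) = \infty\}$ is precisely the event on which the continuous path $\Gamma(\cdot) = \mathfrak{B}(\cdot\,, W, X)$ stays bounded on every interval $[0, T]$, equivalently remains real-valued throughout $[0, \infty)$: if $\Gamma$ stays finite it is bounded on each $[0,T]$ and hence eventually trapped in some $(\widetilde{\ell}_n, \widetilde{r}_n)$ there, forcing $\widetilde{\mathcal{S}}(\Gamma) > T$ for every $T$; conversely $\widetilde{\mathcal{S}}(\Gamma) = \infty$ confines $\Gamma$ to $(\widetilde{\ell}_n, \widetilde{r}_n)$ on $[0,T]$ for suitable $n$. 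Since $\Gamma(\cdot) = \mathfrak{B}(\cdot\,, W, X)$ on $[0, \mathcal{S}(X))$ by Theorem~\ref{T1L}(i), this establishes the claimed equality of events, modulo $\Prob$.

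I expect the main obstacle to be the careful justification of the simultaneity of the two blow-ups in the decisive step, which is exactly where both hypotheses enter: the Barrow--Osgood conditions \eqref{Osg} are needed so that $\J = \R$ and hence ``reaching the boundary of $\J$'' means ``diverging to $\pm\infty$'', while the local boundedness of the continuous input path $W$ is needed so that an additive perturbation by $W$ cannot shift a divergence of $\Gamma$. Without \eqref{Osg} the boundary of $\J$ is finite and $W$ can drive $\Gamma + W$ across it, or hold it back, independently of $\Gamma$, so that $\mathcal{S}(C)$ and $\mathcal{S}(X)$ genuinely differ; some care is also warranted in interpreting the value of $\mathfrak{B}(\cdot\,, W, X)$ at and beyond the explosion time, which I would handle by identifying it with $\Gamma$ on the stochastic interval $[0, \mathcal{S}(X))$.
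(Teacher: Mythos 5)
Your proposal is correct and follows essentially the same route as the paper: the Barrow--Osgood conditions make $\J=\R$, the explosion times of $X(\cdot)$ and $C(\cdot)$ are identified with the blow-up times of $\Gamma+W$ and $\Gamma$ respectively, and these coincide, whence $\mathcal{R}(C,W)=\mathcal{S}(X)$ and the set equality via $X(t)=\Theta_{x_0}(\mathfrak{B}(t,W,X)+W(t))$. Your explicit justification of $\mathcal{S}(X)=\mathcal{S}(C)$ through the local boundedness of the continuous input path is exactly the point the paper leaves implicit with its ``in particular.''
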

		
\begin{proof}
	Under the conditions of \eqref{Osg}  we have $\,\widetilde{\ell} (c)=-\infty$ and $\,\widetilde{r} (c)=\infty\,$ in   \eqref{eq:lc,rc}  for every $\, c \in \I$;  the function $\, \Theta_{x_0} (\cdot)\,$ is then defined on all of $\, \R\,$ and takes values in the interval $\, \mathcal{I} = ( \ell, r)$;  and   the domain of \eqref{eq:D} becomes the rectangle  $\, \mathcal{D}=\left\{(c,w)\,:\, c \in \I\,,\,\,  w \in \R \right\}= \I \times \R\,$.
In particular, we then have $\mathcal{S}(X) = {\cal S} (C)$ and thus $\mathcal{R}(C,W) = \mathcal{S}(X)$ by the definition in  \eqref{R}. 
By Theorem~\ref{T1L}(i)  we have the representation $\, 
	X(  t) = \Theta_{x_0} \big(\mathfrak{B}\big(  t\,, W , X  \big) + W(  t)\big)\,$,  
which then yields the stated set equality.
\end{proof}

\begin{rem} One might wonder how the stopping times $\mathcal{S}(C)$ and $\mathcal{S}(X)$ of Corollary~\ref{T1} relate to each other.  In general, without the Barrow-Osgood conditions~\eqref{Osg}, anything is possible, as we illustrate here with a brief example where   both events $\,\{ \mathcal{S}(X) <  \mathcal{S}(C) \}\,$ and $\, \{ \mathcal{S}(X) >  \mathcal{S}(C) \}\,$ have positive probabilities.  
  We consider $\, \mathcal{I} = (0, \infty)$,  $ \mathfrak{ B} (t, \cdot \,, \cdot) = t$ for all $t \geq 0$, and $\, \mathfrak{s} (x) =   x^2\,$ for all $\, x \in  \mathcal{I} \,$. Then
    $$
 H_c (x) \,=\, \frac{1}{\,c\,} - \frac{1}{\,x\,}, \quad  (c,x) \in (0, \infty)^{2} \qquad \text{and} \qquad  \Theta_c (w) \,=\, \left( \frac{1}{\,c\,\,} - w \right)^{-1}, \quad (c,w) \in \cal D 
  $$
with $\, {\cal D} = \big\{ (c,w) \in (0, \infty) \times  \mathbb{R} : - \infty < w <  1/c  \big\}$; in particular, $\, {\cal J} = ( - \infty,  \, 1/x_0 )\,$,  and the second of the Barrow-Osgood conditions~\eqref{Osg}   fails.   It follows   that 
\begin{align*}
 \Gamma (t) &= t, \quad 0 \leq t < \widetilde{{\cal S}}(\Gamma+W), \qquad
 C(t)= \left( \frac{1}{\,x_0\,} - t \right)^{-1}, \quad 0 \le t < \mathcal{S} (C)  = \frac{1}{\,x_0\,}, \\
 X(t) &= \Theta_{x_0} \big( \Gamma (t)+ W(t) \big) \,=\, \left( \frac{1}{\,x_0\,} - t  - W(t)\right)^{-1}, \qquad 0 \le t < \mathcal{S} (X) = \widetilde{{\cal S}}(t+W).
\end{align*}
Moreover, we have the representation $\mathcal{S} (X) =  \inf \{ t \ge 0 \,:\, t + W(t) = 1/x_0\}$ and it is clear that
both events $\,\{ \mathcal{S}(X) <  1/x_0 \}\,$ and $\, \{ \mathcal{S}(X) >   1/x_0 \}\,$ have positive probabilities.  
\qed
\end{rem}

\section{Examples}  
 \label{sec01}

 We view the term  corresponding to $\,\dx \mathfrak{B}(\cdot\,, W,X)\,$ in \eqref{1} as a sort of generalized or ``singular" drift that allows for both feedback effects $($the dependence on the past and present   of the ``state" process $X(\cdot))$  and feed-forward effects (the dependence on the past and present   of the ``input" process $W(\cdot))$.

\subsection{The case of no dependence on the  state process}  
\label{SS:5.1}

Let us consider mappings $\,\mathfrak{B}(\cdot\,, \cdot\,, \cdot)\,$ that do not depend on the state process  $X(\cdot)$, namely   
$$
\mathfrak{B}(t,\omega,\mathrm{x}) = B(t,\omega),  \qquad    (t,\omega , \mathrm{x} ) \in [0,\infty) \times C([0,\infty);  \R) \times C_a\big([0,\infty);  \overline{\I}\,\big)\,.
$$ 
Here  $\,B: [0,\infty) \times C([0,\infty);  \R) \rightarrow \R\,$ is some progressively measurable  mapping, such that $\,B(\cdot\,,  \omega )\,$ is continuous and of  finite first variation on compact intervals for all $\,\omega \in \Omega\,$.  It should be stressed that $\, B (\cdot\,, W)\,$ {\it need   not be absolutely continuous with respect to {Lebesgue}  measure.} 

We may define, for   some  bounded, measurable function $\beta: [0,\infty) \times \R \rightarrow \R$,  the progressively measurable functional 
\begin{align*} 
		B(t, \omega
	) = \int_0^t \beta \big(s,\omega(s)\big) \, \dx s\,, \qquad 0 \leq t < \infty\,, ~~~
	\omega   
	\in C \big([0,\infty); \R\big).
	\end{align*}
 We might be interested, for example,  in a continuous semimartingale $X(\cdot)$ that is positively drifted whenever the driving noise is positive; in such a case, we might consider, for example, $\,\beta( t, \omega) = \1_{\{\omega( t)>0\}}\,$ for all $\,(t,\omega) \in [0,\infty) \times C ([0,\infty); \R)$.   Alternatively, we may take   $\, B(T, \omega) = \int^{T}_0 \beta (t)\, \mathrm{d} t$, $\,(T, \omega) \in [0, \infty) \times \Omega\,$ for some integrable function  $\beta : [0, \infty) \ra \R$.   

In this setting, the SIE~\eqref{1}     takes the form 
\begin{align} 
\label{eq:ex1.3} 
X(\cdot) &= x_0 + \int_0^{\,\cdot} \mathfrak{s} (  X(t))\, \big[\, \dx W(t) +   \dx B (t,W)  \, \big] - \int_{\I} L^X (\cdot\, , \xi)  \, \mathfrak{s}  (\xi)\, \dx \frac{1}{\mathfrak{s}  (\xi) } \,,
\end{align}
and the corresponding OFE~\eqref{2L} and OIE~\eqref{2} become respectively 
  $\, \Gamma (\cdot) = {B} (\cdot \,, W)\,$ and 
$\,C( \cdot) = x_0 + \int_0^{\, \cdot} \mathfrak{s} \big( C(t) \big)\, \dx B (t,  W(t)   )\,$.  In particular, under any semimartingale measure $\, \Prob \in \mathfrak{P}\,$,  the solutions of \eqref{7}  and \eqref{eq:ex1.3} are then expressed  as  
\begin{align} 
\label{eq: sp independent 1}
C(T) &= \Theta_{x_0} \big(B(T, W)   \big) , \qquad 0 \le T <   \widetilde{\cal S}\big( B(\cdot \, , W)  \big) \equiv {\cal S} (C)  \nonumber\\
 X( T) &= \Theta_{x_0} \big( \mathfrak{B} (T, W) + W(T) \big), \qquad 0 \le T <\widetilde{\cal S}\big(B(\cdot \, , W) + W\big) \equiv {\cal S} (X)\, . 
\end{align} 
 Under the  conditions   \eqref{Osg}, there are no explosions in the present context; i.e., $\mathcal{S}(X) = \mathcal{S}(C) = \infty$.

 The state process $X(\cdot)$ in \eqref{eq: sp independent 1} is  adapted not only to the right-continuous version $\,\mathbb{F}
\,$ of the pure filtration $\mathbb{F}^W$,  but also to this pure filtration itself. And if the function $B(\cdot, \cdot)$ does not depend on the second argument -- that is, if $B(\cdot\,, \omega) = B(\cdot)\,$    is   equal to a given measurable  function of finite first variation on compact subsets of $\,[0, \infty)\,$, for every $\, \omega \in \Omega\,$ -- then for each $t \in [0,\infty)$ the random variables $X(t)$ and $W(t)$ are actually bijections of each other; to wit, $\, \sigma (X (t)) = \sigma(W (t))\,$ holds. Finally, we  note that in the  trivial case $B (\cdot\,, \cdot\,, \cdot) \equiv 0\,$ the solution in \eqref{eq: sp independent 1} simplifies further to $\,X( \cdot) = \Theta_{x_0}  ( W ( \cdot) )\, $.

The next example illustrates that  the above arguments   can be generalized somewhat. 

 \begin{example}
	In the notation of this subsection, let  
	$\,A: \R \rightarrow (0,\infty)\,$ be a measurable function such that $1/ A(\cdot)$ is integrable on compact subsets of $\R$. Moreover, we shall consider  a continuous mapping 
	$\, t \mapsto B( t, \omega)$ of finite first variation on compact intervals for all $\omega \in \Omega\,$.  Let us  fix
$$
\mathfrak{B}(T,\omega,\mathrm{x}) = \1_{ \{\varrhob(\omega,\mathrm{x})>T \}} \int_0^{T } A\big(H_{x_0}(\mathrm{x}(t))-\omega(t)\big)\, \dx B(t,\omega) 
$$ 
for all $\, (T, \omega , \mathrm{x} ) \in [0,\infty) \times  C([0,\infty);  \R) \times C_a \big([0,\infty);  \overline{\I}\, \big)\,$, with 
$$
\varrhob (\omega,\mathrm{x}) \,:=\, \inf \left\{ T \ge 0 \,:\, \int_0^{T } A\big(H_{x_0}(\mathrm{x}(t))-\omega(t)\big)\, \dx | B |(t,\omega) = \infty \right\}. 
$$
Then  the corresponding SIE~\eqref{1} can be written as
\begin{align} 
\label{1:ex5.1} 
	X(\cdot)  = x_0 + \int_0^{\,\cdot} \mathfrak{s} (  X(t))  \big[  \dx W(t) + A\big(H_{x_0}(\mathrm{x}(t))-W(t)\big)   \dx B (t,W)    \big] - \int_{\I} L^X (\cdot\, , \xi)  \, \mathfrak{s}  (\xi)\, \dx \frac{1}{ \mathfrak{s}  (\xi) } 
\end{align}
and the corresponding OFE~\eqref{7L} as
\begin{align} 
\label{1:ex5.3} 
\Gamma(\cdot) =  \int_0^{\, \cdot}  A\big(\Gamma(t)\big)  \,\dx B(t,W).
\end{align}

We define now the functions $\,\mathbf{ H}_0(\cdot)\,$ and $     {\bm \Theta}  _0(\cdot)$ in the same way as   $H_0(\cdot)$ and $\Theta_0(\cdot)$  but   with $\mathfrak{s}(\cdot)$ replaced by $A(\cdot)$. The arguments in Remark~\ref{R:ODE} show that the unique solution of the equation \eqref{1:ex5.3} is  
 \begin{align*} 
	\Gamma(t) \,=\, {\bm \Theta}_0 \big(B( t,W)\big)\,, \quad 0 \le t < \widetilde{{\bm S}} \big(B(\cdot \,,W)\big)
\end{align*}
with 
\begin{align*}
\widetilde{{\bm S}} \big(B(T,\omega)\big)\, :=\, \inf \left\{ t \ge 0\,:\, B \big(t, \omega \big)\notin \left( - 
	\int_{-\infty}^0 {\dx z \over \, A(z)\,}\,, \, \,
	 \int_0^\infty {\dx z \over \,  A(z)\,} 
	 \right) \right\}  
\end{align*}
in the manner of the stop-rule in \eqref{eq:S2}; and that the SIE~\eqref{1:ex5.1}  has then a unique $\mathbb{F}^W$--adapted solution  under each probability measure $\,\Prob \in \mathfrak{P}\,$, namely 
\begin{align} 
\label{1:ex5.7}
X(t) \,=\,{ \Theta}_{ x_0 } \big( {\bm \Theta}_{ 0 } ( B( t,W) ) + W (t) \big)\,, \qquad 0 \le t < \widetilde{\mathcal{S}} \big( {\bm \Theta}_{  0 } ( B( \cdot \,,W) ) + W   \big) = \mathcal{S} (X).
\end{align}

For instance, let us consider the case $\,\I = (0,\infty)\,$, $\,x_0=1$, and $\,\mathfrak{s}(x) = x\,$ for all $\,x \in \I\,$. Then we have $H_{x_0} (x) = \log ( x  ) \,$ for all $x \in \mathcal{I} $,  and the equation of \eqref{1:ex5.1} simplifies to
\begin{align} 
\label{1:ex5.2} 
	X(\cdot) &= 1 + \int_0^{\cdot} X(t) \Big[\, \dx W(t) +   
	A\big(\log(X(t))-W(t)\big) \,\dx B (t,W)    
	+  \frac{1}{\,2\,}  \,
	\dx \langle W\rangle(t) \, \Big]\,.
\end{align}
\noindent
This SIE has then a unique $\mathbb{F}^W$--adapted solution  under each probability measure $\,\Prob \in \mathfrak{P}\,$, given by \eqref{1:ex5.7} 
as
\begin{align} 
\label{1:ex5.7b}
X(t) \,= \exp\big({\bm \Theta}_{ 0 } ( B( t,W) ) + W (t) \big)\,, \qquad 0 \le t < \widetilde{{\bm S}} \big(B(\cdot \,,W)\big)  = \mathcal{S} (X).
\end{align}
More specifically, let us consider the case $A(x) = \exp(-x)$ for all $\,x \in \R$. Then we have $\,{\bm \Theta}_{ 0 }(y) = \log(1+y)\,$ for all $\,y \in (-1, \infty)$, and the SIE~\eqref{1:ex5.2} simplifies to 
\begin{align*} 
	X(\cdot) &= 1 + \int_0^{\cdot} X(t)  \dx W(t) + \int_0^\cdot \exp\big(W(t)\big)\, \dx B(t,W) + \frac{1}{2}  \int_0^\cdot X(t) \, \dx \langle W\rangle(t) \,;
\end{align*}
from \eqref{1:ex5.7b}, the unique solution of this stochastic integral equation  is $\, 
X(t) = \big(1+B(t,W)\big) \exp\big( W (t) \big)\,$, $\,0 \le t < \infty\,$,
and it is easy to check that this expression indeed  solves the equation. \qed
\end{example}

 \subsection{Absolutely continuous drifts} \label{SS:Ex2}
Another very important example for the term $\,\dx \mathfrak{B}(\cdot,W, X)$ involves a measurable function $\,\mathfrak{b}: [0,\infty) \times \R \times \I \rightarrow \R\,$    such that, for all $ \,(T,K) \in (0, \infty)^2\,$, the functions
\begin{equation*}
\overline{\,\mathfrak{b}}_K(\cdot) :=  \sup_{(t,w) \in [0 , T] \times   [-K,K]} \big|  \mathfrak{b} ( t, w, \cdot) \big| ~~\text{ are  integrable on compact subsets of } \,\I \,; 
\end{equation*}
see \citet{Engelbert1991}. For any given  $\,(\omega , \mathrm{x} )  \in C ([0,\infty); \R) \times C_a \big([0,\infty); \overline{\I}\,\big)$,  we   define   
\begin{equation*}
	\mathfrak{B}(T,\omega,\mathrm{x}) :=  \1_{ \{\varrhob(\omega,\mathrm{x})>T\}}  \int_0^T \mathfrak{b}\big(t,\omega(t), \mathrm{x }(t)\big) \, \dx t
\end{equation*}
for  all $T \geq 0$ along with the stop-rule   
$$
\varrhob(\omega, \mathrm{x}):= \inf \left\{ T \ge 0\,: \,\int_0^T  \big| \mathfrak{b}(t,\omega(t),\mathrm{x}(t)) \big|  \,\dx t \,=\,   \infty \,\right\} \,.
$$    
 The SIE~\eqref{1}  takes then the form
\begin{align}
\label{1b}
X(\cdot) \,=\, x_0 + \int_0^{\,\cdot} \mathfrak{s} (  X(t))\, \big[ \,\dx W(t) + \mathfrak{b} \big(t,W(t),X(t)\big) \,\dx t \,\big] - \int_{\I} L^X (\cdot\, , \xi) \, \mathfrak{s}  (\xi)\, \dx \frac{1}{\mathfrak{s}  (\xi) } \, ;
\end{align}
and when  $\mathfrak{b}(\cdot\,, \cdot\,, \cdot)$ does not depend on the second argument, this equation simplifies further  to the SIE~\eqref{1c}.  In the context of this example, \eqref{2L} becomes an OIE of the form  
 \begin{equation}
 \label{2LL}
\Gamma (T) \,=\,  \int_0^T \mathfrak{b} \big(t,   W, \Theta_{x_0} (  \Gamma (t) + W  (t)  ) \big)\, \dx t  \,, \qquad 0 \le T <  \widetilde{\cal S} (\Gamma +W) = {\cal S} (X).
\end{equation}
On the other hand, the   OIE~\eqref{7} corresponding to the SIE~\eqref{1b} takes the form
\begin{equation}
C(\cdot) \, =  \,x_0 + \int_0^{\,\cdot} \mathfrak{s} \big( C(t) \big) \, \mathfrak{b}\left(t,  \omega(t), \Theta_{C(t)} ( \omega(t)   ) \right)  \dx t    \,. \label{77} 
\end{equation}

\begin{rem}   \label{BO_Bound} 
Under the Barrow-Osgood conditions of \eqref{Osg}  we have $\mathcal{S}(C) = \mathcal{S}(X)$ by Corollary~\ref{C:BO} and also $\mathcal{S}(X) = \varrhob(W,X)\,$, since those conditions imply  $$\,\mathcal{S}(X) =\widetilde{\cal S} (\Gamma +W) = \widetilde{\cal S} (\Gamma) = \varrhob(W,X) .$$ 
 In particular, if the drift function $\,\mathfrak{b}(\cdot\,,  \cdot\,,  \cdot)\,$ is bounded, then all the stopping times in the above display are infinite. \qed
\end{rem}

\begin{example}  [{{\it A Counterexample}}] 
We cannot expect the SIE~\eqref{1}, or for that matter the the OIE~\eqref{7}, to admit $\,\mathbb{F}
$--adapted solutions for a general  progressively measurable  functional $\,\mathfrak{ B} (\cdot\,, \cdot\,, \cdot)    $.  For instance, take $\, \mathbb{P} = \mathbb{P}_*\,$ to be Wiener measure, take $\,\mathfrak{s} (\cdot )   \equiv 1\, $, and consider 
$$
\mathfrak{ B} (T, \omega, \mathrm{x})\,=\, \int_0^T \mathrm{b}  ( t,   \mathrm{x})\, \dx t \,, \qquad (T,\omega, \mathrm{x}) \in [0, \infty) \times C\big([0,\infty); \R \big) \times C \big([0,\infty); \R \big)
$$
for the  bounded drift
$$
\mathrm{b} ( t,   \mathrm{x}\,)= \left\{ \frac{\, \mathrm{x}(t_k) - \mathrm{x}(t_{k-1})\,}{  t_k  -  t_{k-1}} \right\}, \quad  t_k  <t\le  t_{k+1}\,; \qquad \mathrm{b} \big( t,   \mathrm{x}\,\big)=0 \quad \text{for}~~ t=0\,,~t>1 
$$
of \citet{Tsirelson}. Here $\{ \xi\}$ stands for the fractional part 
of the number $\, \xi \in \R\,$, and $\big( t_k\big)_{k \in - \N}\,$ is a strictly increasing sequence of numbers with $\, t_0=1\,$, with $\, 0<t_k <1\,$ for $\, k<0\,$, and with $\, \lim_{k \downarrow - \infty} t_k =0\,$. It was shown in the landmark paper of \citet{Tsirelson} (see also pages~195-197 of \citet{Ikeda_Watanabe} or pages~392-393 of \citet{RY}) that the resulting SIE
$$
X(\cdot)\,=\, x_0 + \int_0^{\, \cdot} \mathrm{b} ( t, X  ) \, \mathrm{d} t + W(\cdot)
$$   
in \eqref{1}, driven by the $\, \Prob_*$--Brownian motion $\, W(\cdot)\,$,  admits a   weak solution which is unique in distribution, but {\it no strong solution}; see also the deep work of \citet{Benes1977, Benes1978} for far-reaching generalizations and interpretations of Tsirelson's result. As a result, the OIE
$$
C(\cdot)\,=\, x_0 + \int_0^{\, \cdot} \mathrm{b} \big( t, W  + C   \big) \, \mathrm{d} t  
$$ 
of \eqref{7} cannot possibly admit an $\,\mathbb{F}$--adapted solution in this case.   \qed
\end{example}

\subsection{The time-homogenous case}
 \label{sec3.1}

We consider a measurable function   $\,\mathfrak{b}: \I \rightarrow \R\,$ which is integrable on compact subsets of $\I\,$, as well as a  signed measure $\, {\bm \mu}\,$ on the Borel sigma algebra $\, {\cal B} ( \I)$ which is finite on compact subsets of $\,\I\,$. As in Subsection~\ref{SS:Ex2}, we then introduce the progressively measurable mapping 
\begin{equation*}
	\mathfrak{B}(T,\omega,\mathrm{x}) \equiv \mathfrak{B}(T, \mathrm{x}) :=  \1_{ \{\varrhob( \mathrm{x})>T\}} \left(\int_0^{T} \mathfrak{b} ( \mathrm{x} (t) )  \dx t + \int_\I L^\mathrm{x}(T, \xi ) \, \frac{\,{\bm \mu} (\dx  \xi)\,}{\mathfrak{s}(\xi)} \right), \qquad  T  \in [0, 	\infty)  
\end{equation*}
as well as the stop-rule 
$$\,    \varrhob(\omega, \mathrm{x})   \equiv 
\varrhob(  \mathrm{x}) := \inf \left\{ T \ge 0\,: \,\int_0^T  \left|\mathfrak{b}(\mathrm{x}(t))\right|  \,\dx t +  \int_\I L^\mathrm{x}(T \,, \xi ) \, \frac{\,{| \bm \mu |} (\dx  \xi)\,}{\mathfrak{s}(\xi)}  \, = \infty \,\right\}  
$$    
for all  $\,(\omega , \mathrm{x}  ) \in C([0,\infty); \R) \times C_a \big([0,\infty); \overline{\I}\, \big)$.  
With this choice of drift, the SIE~\eqref{1} can be written as
\begin{align}
X(\cdot) 	&\,= \,x_0 + \int_0^{\,\cdot} \mathfrak{s} (  X(t)) \, \big[ \,\dx W(t) + \mathfrak{b} (  X(t)) \,\dx t \, \big] + \int_{\I} L^X (\cdot\, , \xi) \,  \Big[\, {\bm \mu} ( \dx  \xi) - \, \mathfrak{s}  (\xi)\, \dx \frac{1}{\mathfrak{s}  (\xi) } \, \Big]\,; \label{eq:1.3b} 
\end{align}
whereas the corresponding OFE~\eqref{2L} and OIE~\eqref{7} take respectively the form
\begin{align}
\label{77c}
\Gamma (\cdot) &=   \int_0^{\,\cdot}      \,\mathfrak{b}\big(  \Theta_{x_0} ( \Gamma(t)  + W(t) ) \big) \, \dx t +   \int_\I L^{ \Theta_{x_0} ( \Gamma + W )} \left(\cdot\, , \xi \right) \, \frac{\, {\bm \mu} (\dx  (\xi)\,}{\mathfrak{s}(\xi)}  ;
\\
\label{77b}
C(\cdot) &=  x_0 +  \int_0^{\,\cdot} \mathfrak{s} \big( C(t) \big) \left[\,\mathfrak{b}\left(  \Theta_{C(t)} ( \omega(t)   ) \right)  \dx t +   \int_\I L^{ \Theta_{C(\cdot)} ( \omega(\cdot) )} \left(\dx t, \xi \right) \, \frac{\, {\bm \mu} (\dx  (\xi)\,}{\mathfrak{s}(\xi)}   \, \right]. 
\end{align}
 We also note that the special case $\, {\bm \mu} \equiv 0\,$ leads to  the time-homogeneous version  
\begin{equation*}
X(\cdot) \,=\, x_0 + \int_0^{\,\cdot} \mathfrak{s} (  X(t))\, \big[ \,\dx W(t) + \mathfrak{b} (X(t))\, \dx t \, \big] - \int_\I L^X (\,\cdot\, , \xi)   \, \mathfrak{s}  (\xi)\, \dx \, \frac{1}{\mathfrak{s}  (\xi) } \,.  
\end{equation*}
of \eqref{1c}. If we choose the measure $\, {\bm \mu}\,$ so that  $\, {\bm \mu}([a,b)) =   \int_{[a,b)} \, \mathfrak{s}  (\xi)\, \dx \big( 1 / \mathfrak{s}  (\xi)  \big) \,$ holds for all $\,(a, b) \in \I^{\,2}\,$ with $\,a<b\,$, then the local time term in \eqref{eq:1.3b} disappears  entirely.

\subsubsection*{The time-homogenous case under Wiener measure}
 \label{sec3.1W}

 Let us  consider  next  {\it under the Wiener measure $\, \Prob_*\,$}   
   the SIE~\eqref{eq:1.3b},  now written in the more ``canonical" form 
\begin{equation*}
X(\cdot) \,=\, x_0 + \int_0^{\,\cdot} \mathfrak{s} (  X(t))\, \dx W(t) + 
 \int_\I L^X (\,\cdot\, , \xi) \, {\bm \nu} (\dx  \xi)\, ;
\end{equation*}
 here $\, {\bm \nu}\,$ is the  measure on the Borel sigma algebra of $\I$, given by
\begin{equation}
\label{meas:nu}
{\bm \nu} \big( [a,b) \big) \,=\, {\bm \mu} \big( [a,b) \big) - \int_{[a,b)} \, \mathfrak{s}  (\xi)\, \dx \frac{1}{ \mathfrak{s}  (\xi)  } + \,2 \int_a^b { \mathfrak{b}  (\xi) \over \,\mathfrak{s}  (\xi)\,}\,\dx \xi\,, \qquad \ell < a < b <r\,.
\end{equation}

   \begin{thm}  \label{T:5.2} 
   In the context of this subsection, suppose that  the signed measure $\,{\bm  \nu}\,$ of \eqref{meas:nu}    satisfies
$$
{\bm \nu} \big( \{ x\} \big) \, < \, 1\,, \qquad x \in \I\,.
$$
Suppose also that there exist an increasing function $\, f: \I \ra \R\,$,  a nonnegative, measurable function $\, g: \R \ra [0, \infty)\,$, and  a real constant $c >0 $,  such that we have 
$$ 
\int_{-\varepsilon}^{\varepsilon}  \, \frac{\, \dx y \,}{ g(y)}\, =\, \infty, \qquad \varepsilon > 0\,,
$$ 
as well as 
$$
\big| \mathfrak{s} (\xi + y) - \mathfrak{s} (\xi) \big|^2 \, \le \, \frac{\,g(y)\,}{|y|}\, \big| f(\xi + y) - f(\xi) \big|    
$$
for all $\,\xi \in \I$ and $\,y \in (-c,c) \setminus \{ 0\}\,$ with $\xi +y \in \I\,$.

 Then under the Wiener measure $\, \Prob_*\,$, the SIE   of  
 \eqref{eq:1.3b}  has a pathwise unique, $\mathbb{F}$--adapted  
 solution $\, X(\cdot)\,$. Therefore, on account of Theorem~\ref{T1L},    and again under the Wiener measure $\, \Prob_*\,$, the  OFE~\eqref{77c} has also a unique $\mathbb{F}$--adapted solution $\Gamma(\cdot)\,;$   and these solutions are related via   the   evaluation of \eqref{8L}, namely  
$X( t) = \Theta_{x_0} ( \Gamma(t)+W( t)  )\,$ for $\,  0 \le t <\mathcal{S}(X) =  \widetilde{\cal S}(\Gamma + W)$.
\end{thm}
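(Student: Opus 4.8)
The plan is to prove, under the Wiener measure $\Prob_*$, weak existence together with pathwise uniqueness for the SIE~\eqref{eq:1.3b}, and then to invoke the Yamada--Watanabe principle to obtain a pathwise unique, strong (hence $\mathbb F$--adapted) solution $X(\cdot)$. The statements about the OFE~\eqref{77c} then come for free: Theorem~\ref{T1L} and Corollary~\ref{cor2} transfer existence and uniqueness between the SIE and its associated OFE in both directions, and the representation $X(t)=\Theta_{x_0}(\Gamma(t)+W(t))$ is precisely the evaluation~\eqref{8L}. Because all assertions are made only up to the explosion time $\mathcal S(X)$, I would run the argument first on each compact subinterval $(\ell_n,r_n)$ — on which, by Remark~\ref{1/s}, $\mathfrak s$ is bounded away from $0$ and $\infty$ — and pass to the limit $n\uparrow\infty$ at the end.

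For weak existence I would pass to natural scale. The generalized drift of \eqref{eq:1.3b} is $\int_\I L^X(\cdot,\xi)\,{\bm \nu}(\dx\xi)$, so I look for a strictly increasing scale function $p$ whose left-continuous derivative solves the linear measure equation $\dx p'(\xi)=-p'(\xi)\,{\bm \nu}(\dx\xi)$; its solution is the Dol\'eans-type exponential of $-{\bm \nu}$, and the hypothesis ${\bm \nu}(\{x\})<1$ is exactly what keeps every multiplicative factor $1-{\bm \nu}(\{x\})$ positive, so that $p'>0$ and $p$ is a bona fide change of scale. By the It\^o--Tanaka formula the local-time drift cancels and $Z:=p(X)$ obeys the driftless equation $\dx Z=\widetilde\sigma(Z)\,\dx W$ with $\widetilde\sigma=(p'\mathfrak s)\circ p^{-1}$. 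Since $\widetilde\sigma$ is bounded away from $0$ and $\infty$ on compacts, $1/\widetilde\sigma^2$ is locally integrable and $\widetilde\sigma$ has no zeros, so the Engelbert--Schmidt criterion delivers a weak solution of the $Z$--equation (continuity of $\widetilde\sigma$ is not needed here, which is fortunate since $p'$ jumps at the atoms of ${\bm \nu}$); inverting $p$ yields a weak solution of \eqref{eq:1.3b}.

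The core of the proof is pathwise uniqueness, where the two hypotheses play complementary roles. Given two solutions $X^1,X^2$ driven by the same $W$ and started at $x_0$, set $\Delta:=X^1-X^2$, so that $\dx\langle\Delta\rangle_s=|\mathfrak s(X^1_s)-\mathfrak s(X^2_s)|^2\,\dx s$. I would run the Yamada--Watanabe test-function scheme: build even $\psi_n\uparrow|\cdot|$ with $\psi_n''$ supported near the origin and $\psi_n''(x)\le 2/(n\,g(x))$, which is possible precisely because $\int_{-\varepsilon}^{\varepsilon}\dx y/g(y)=\infty$. Applying It\^o's rule to $\psi_n(\Delta)$ and using the hypothesis $|\mathfrak s(\xi+y)-\mathfrak s(\xi)|^2\le g(y)|y|^{-1}|f(\xi+y)-f(\xi)|$, the dispersion contribution is bounded by $n^{-1}\E\int_0^t\Delta_s^{-1}\big(f(X^1_s)-f(X^2_s)\big)\1_{\{\Delta_s>0\}}\,\dx s$; here the increasing function $f$ enters through the representation $f(X^1_s)-f(X^2_s)=\int_\R\1_{(X^2_s,X^1_s]}(u)\,\dx f(u)$, after which Le~Gall's occupation-time estimate converts the difference quotient of $f$ into the finite occupation densities of $X^1,X^2$ and bounds this integral uniformly in $n$, so the whole dispersion term vanishes as $n\to\infty$. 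It then remains to control the drift contribution $\E\int_0^t\psi_n'(\Delta_s)\,\dx\big(\!\int_\I(L^{X^1}-L^{X^2})(\cdot,\xi)\,{\bm \nu}(\dx\xi)\big)_s$; here the atomic condition ${\bm \nu}(\{x\})<1$ is what prevents the singular drift from amplifying the separation of the two paths at the atoms of ${\bm \nu}$, and leads to a Gronwall-type inequality for $\E|\Delta_{t\wedge\tau_n}|$, forcing $X^1\equiv X^2$ up to explosion.

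I expect the principal obstacle to be exactly this interplay, in the uniqueness step, between the singular local-time drift and the generalized modulus of continuity encoded by the pair $(f,g)$ — which simultaneously contains Le~Gall's condition (the case $g(y)=|y|$) and the classical Yamada--Watanabe condition. One must verify that Le~Gall's occupation-time estimate, devised for the dispersion term alone, survives the presence of the ${\bm \nu}$--drift, and must track right- versus symmetric local times and the atoms of ${\bm \nu}$ carefully throughout the Tanaka/It\^o computation. Everything else — the scale change, the Engelbert--Schmidt existence, the Yamada--Watanabe upgrade to a strong solution, and the final transfer to the OFE via Theorem~\ref{T1L} and Corollary~\ref{cor2} — is by now standard, modulo the localization needed to accommodate explosions.
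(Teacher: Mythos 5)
Your overall architecture coincides with what the paper actually does: the paper disposes of the first claim by citation to Theorem~4.48 of Engelbert and Schmidt (1991) (see also Le~Gall (1984)), whose proof is exactly the chain you describe --- Zvonkin-type removal of the generalized drift via a scale function whose left derivative is the Dol\'eans exponential of $-{\bm\nu}$ (positivity of which is where ${\bm\nu}(\{x\})<1$ enters), Engelbert--Schmidt weak existence for the resulting driftless equation, a Le~Gall/Yamada--Watanabe argument for pathwise uniqueness using the pair $(f,g)$, and the Yamada--Watanabe principle to upgrade to a strong solution; the OFE statements then follow from Theorem~\ref{T1L} and Corollary~\ref{cor2} exactly as you say. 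One small point the paper does address and you do not: the cited works produce a solution adapted to the $\Prob_*$--augmented filtration, and one must pass to a $\Prob_*$--indistinguishable $\mathbb{F}$--adapted modification (Remark~I.1.37 in Jacod--Shiryaev) to get the adaptedness claimed in the statement.

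The genuine gap is in your pathwise-uniqueness step. You propose to run the Yamada--Watanabe test-function scheme on the \emph{original} equation, with the singular drift $A^i(\cdot)=\int_\I L^{X^i}(\cdot\,,\xi)\,{\bm\nu}(\dx\xi)$ still present, and to control the resulting term $\E\int_0^t\psi_n'(\Delta_s)\,\dx(A^1-A^2)_s$ by ``a Gronwall-type inequality'' justified by the atomic condition. This does not work as stated: as $n\uparrow\infty$ the term converges to $\int_0^t \mathrm{sgn}(\Delta_s)\,\dx(A^1-A^2)_s$, and the map $X\mapsto L^X(\cdot\,,\xi)$ is in no useful sense Lipschitz in $X$, so there is no bound of the form $C\int_0^t|\Delta_s|\,\dx s$; indeed for skew-Brownian-type drifts this is precisely the term responsible for non-uniqueness when the atoms are too large, so the sign structure must be exploited rather than Gronwall'ed away. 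The standard resolution --- and the one in the sources the paper cites --- is to perform the scale change \emph{first}, so that both $p(X^1)$ and $p(X^2)$ solve the driftless equation $\dx Z=\widetilde\sigma(Z)\,\dx W$, and then to run the Le~Gall/Yamada--Watanabe occupation-time argument on $\widetilde\sigma=(p'\mathfrak s)\circ p^{-1}$ alone; this requires the additional (routine but necessary) verification that $\widetilde\sigma$ inherits a condition of the form $|\widetilde\sigma(z+y)-\widetilde\sigma(z)|^2\le g(y)|y|^{-1}|\widetilde f(z+y)-\widetilde f(z)|$ for a new increasing $\widetilde f$, which holds locally because $p'$ is bounded away from $0$ and $\infty$ and of finite variation on compacts. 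Since you already construct $p$ for the existence part, this is a repair within your own setup rather than a change of strategy, but as written the uniqueness argument would fail.
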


The   first claim of Theorem~\ref{T:5.2}   is proved as in Theorem~4.48 in \citet{Engelbert_Schmidt_1991}; see also   \citet{ LeGall1984}, \citet{BarlowPerkins}, \citet{Engelbert_Schmidt_lecture},  
and \citet{BleiEngelbert2012, BleiEngelbert2013}. 
The argument proceeds by  the familiar \citet{Zvonkin1974} method of {\it removal of drift;}  \citet{StroockYor1981}, \citet{LeGall1984}, and \citet{Engelbert_Schmidt_lecture} contain early usage of this technique in the context of stochastic integral equations with generalized drifts.  In these works  the filtration is   
augmented by the $\Prob_*$--nullsets; however, there always exists a $\Prob_*$--indistinguishable modification $X(\cdot)$ of the solution process that is $\mathbb{F}$--adapted (see Remark~I.1.37 in \citet{JacodS}).

This reduction to a diffusion in natural scale, along with     the classical {Feller}  test,  leads to   necessary and sufficient conditions for the absence of explosions $\,\Prob_*  \big(\mathcal{S} (X) = \infty\big)=1\,$ in the spirit of \citet{MU_integral}, \citet{Karatzas_Ruf_2013}; the straightforward details are   left to the diligent reader.

\subsection{A close relative of the Skew Brownian Motion}  \label{SS:Ex4}
For two given real numbers $\, \rho >0\,,\, \sigma >0\,$, let us consider the SIE
\begin{equation}
\label{skew-like}
X(\cdot) \,=\, \int_0^{\, \cdot} \Big( \rho \, \1_{ (- \infty, 0]} (X(t)) + \sigma \, \1_{ (0,  \infty)} (X(t))\Big) \dx W(t) \,+\, \frac{\, \sigma - \rho\, }{\sigma } \, L^X (\cdot\,, 0)\,.
\end{equation}
    This corresponds to the equation \eqref{1} with $\,\mathfrak{B}(\cdot\,, \cdot\,, \cdot) \equiv 0\,$, state space $\, \I = \mathbb{R}\,$, initial condition $\,x_0=0$ and dispersion function $\,  \mathfrak{s} = \rho \, \1_{ (- \infty, 0]} + \sigma \, \1_{ (0,  \infty)}\,$, thus 
    $$
 H_0 (x) = \frac{\,x\,}{\rho}\,     \1_{ (- \infty, 0]} (x) + \frac{\,x\,}{\sigma}\,     \1_{ (0, \infty)} (x)\,, \qquad \Theta_0 (w) =   \rho \, w\,     \1_{ (- \infty, 0]} (w) +  \sigma \,   w\,  \1_{ (0, \infty)} (w) 
        $$
for the function of \eqref{3} and its inverse.  The Barrow-Osgood conditions \eqref{Osg}    are obviously satisfied here, explosions are non-existent, whereas Theorem~\ref{T1L} or Corollary~\ref{T1} imply that 
\begin{equation}
\label{new_skew}
 X(t) \,=\, \Theta_0 \big( W(t)\big) \,=\, \sigma\, W^+ (t) - \rho\, W^- (t)\,, \qquad 0 \le t < \infty 
\end{equation}
is the unique solution of \eqref{skew-like}.    Indeed, it can be checked by fairly straightforward application of the It\^o-Tanaka formula,  that the process of \eqref{new_skew}  satisfies SIE~\eqref{skew-like} under  any semimartingale measure $\, \mathbb{P} \in  \mathfrak{P}\,$; and conversely, that every solution of this equation has to be given by the    expression in \eqref{new_skew}.  
    
   Suppose now that the canonical process $\, W(\cdot)\,$ is Skew Brownian motion with parameter $\, \alpha \in (0,1) $, to wit,  that   the process $\, V(\cdot) \equiv W(\cdot) - \big( ( 2 \alpha - 1) / \alpha\big) L^W(\cdot\,, 0)\,$ is standard Brownian motion, under the probability measure $\, \mathbb{P} \in  \mathfrak{P}\,$ (cf.$\,$\citet{HarrisonShepp}). Then it can be checked, by considerations similar to those   in Remark~\ref{insidious},  that  the SIE~\eqref{skew-like} takes the equivalent form
 \begin{equation*}
X(\cdot) \,=\, \int_0^{\, \cdot} \Big( \rho \, \1_{ (- \infty, 0]} (X(t)) + \sigma \, \1_{ (0,  \infty)} (X(t))\Big) \dx V(t) \,+\, \left( 1 - \frac{\, (1-\alpha)\, \rho\, }{\alpha \, \sigma } \right) L^X (\cdot\,, 0)\,.
\end{equation*}
  Such  equations    have been studied before, for example by \citet{Ouknine1991} and \citet{Lejay2006}.

  \section{A comparison result}
  \label{comp}
  
Let us place ourselves  again in the context of Subsection~\ref{SS:Ex2} with  the function  $\, \mathfrak{b} : [0, \infty) \times \R \times \I \rightarrow \R\,$ {\it continuous,} and fix an arbitrary  semimartingale measure $\, \Prob \in \mathfrak{P}\,$. Then, in terms of  the continuous, real-valued function  
  \begin{equation}
\label{cue}
  G (t, w, \gamma)\, := \,    \mathfrak{b}  \big(t,w, \Theta_{x_0} (  \gamma + w) \big)\, ;\quad t \in [0, \infty), ~~( \gamma , w) \in {\cal E}:= \big\{(\gamma, w) \in \R^2: \gamma + w \in \J \big\}
  \end{equation} 
we can write the OFE~\eqref{2LL} in the slightly more compact form 
  \begin{equation}
\label{CQW}
\Gamma (\cdot)\, =\,   \int_0^{\, \cdot} G \big(t, W(t), \Gamma(t)\big)  \dx t\,.
  \end{equation} 

From Theorem~III.2.1 in \citet{Hartman1982}, we know that this equation has a {\it maximal}  solution $\, \overline{\Gamma}  (\cdot)\,$, defined up until the  time $\, \widetilde{{\cal S}} (\overline{\Gamma}+ W )\,$. Assuming that this solution $\, \overline{\Gamma}  (\cdot)\,$ is $\,\mathbb{F}$--adapted, we observe --- on the strength of Theorem~\ref{T1L} and of the strict increase of the mapping $\,   \Theta_{x_0} (\cdot)\,$ (see also \eqref{5}) --- that the corresponding $\,\mathbb{F}
$--adapted process 
$$\, \overline{X}  (t) \,:=\, \Theta_{x_0} \big(\overline \Gamma (t) +W(t)\big)\,, \quad \,\, 0 \le t < \widetilde{{\cal S}} \big(\overline{\Gamma}+ W \big) = {\cal S} (\overline{X}) $$ from \eqref{8L}, \eqref{eq:S} is the {\it maximal  solution} on the filtered probability   space $\, ( \Omega, {\cal F}, \Prob),\,  \mathbb{F} =  \{ \mathcal{F}
(t) \}_{0 \le t < \infty}\,$  of the SIE  \eqref{1b}, namely, 
$$
X(\cdot) \,=\, x_0 + \int_0^{\,\cdot} \mathfrak{s} (  X(t))\, \big[ \,\dx W(t) + \mathfrak{b} \big(t,W(t),X(t)\big) \,\dx t \,\big] - \int_{\I} L^X (\cdot\, , \xi) \,  \mathfrak{s}  (\xi)\, \dx \frac{ 1} { \mathfrak{s}  (\xi)  }\,.
$$

We fix now a number $\,\widehat{x}_0 \in (\ell, x_0]\,$ and   consider   yet another continuous function $\,\widehat{\mathfrak{b}} : [0, \infty) \times \R \times \I \rightarrow \R\,$ satisfying the pointwise comparison 
\begin{equation}
\label{bb}
\widehat{\mathfrak{b}} (t,w,x)  \,\le \, \mathfrak{b}  (t,w,x)   \, ,  \qquad (t,w,x) \in [0, \infty) \times \R \times \I\,,
\end{equation}
 thus also  the comparison 
$$\widehat{\mathfrak{b}}     \big(t,w, \Theta_{x_0} (  \gamma + w) \big) =: \widehat{G} (t, w, \gamma) \, \le G (t, w, \gamma), \qquad t \in [0, \infty), ~~( \gamma , w) \in  {\cal E} 
   $$  
with the notation of \eqref{cue}.   Then we know from Theorem~\ref{T1L} that {\it any} $\,\mathbb{F}$--adapted process $\widehat{X}  (\cdot) \,$ satisfying, on the filtered probability  space $\, ( \Omega, {\cal F}, \Prob),\,  \mathbb{F}=  \{ \mathcal{F} (t) \}_{0 \le t < \infty}\,$, the equation 
\begin{equation}
\label{1bb}
\widehat{X}(\cdot) \,=\, \widehat{x}_0 + \int_0^{\,\cdot} \mathfrak{s} (  \widehat{X}(t))\, \left[ \,\dx W(t) + \widehat{\mathfrak{b}} \big(t,W(t),\widehat{X}(t) \big) \,\dx t \,\right] - \int_{\I} L^{\widehat{X}} (\cdot\, , \xi) \,    \mathfrak{s}  (\xi)\, \dx \frac{1 }{ \mathfrak{s}  (\xi) } ,
\end{equation}
  can be cast in the manner of \eqref{8L}, \eqref{eq:S} as 
$$\,
\widehat{X}( t) =  \Theta_{x_0} \big(\,\widehat{\Gamma} (t) +W(t)\big)\,,  \qquad 0 \le t < \mathcal{S} (\widehat{X}) = \widetilde{{\cal S}} \big(\widehat{\Gamma} + W \big)\, .
$$ 
Here the $\,\mathbb{F} $--adapted process $\widehat{\Gamma}  (\cdot) \,$ satisfies, up until the stopping  time $\, \mathcal{S} (\widehat{X}) =\widetilde{ {\cal S}} \big(\widehat{\Gamma}+ W \big)\,$, the analogue of the OIE~\eqref{CQW}, namely
$$
\widehat{\Gamma }(\cdot)\, =\,   \int_0^{\, \cdot} \widehat{G} \big(t, W(t), \widehat{\Gamma}(t)\big) \, \dx t\,.
$$
Corollary~III.4.2 in \citet{Hartman1982} asserts now that the comparison $\widehat{\Gamma}  (\cdot) \le \overline{\Gamma}(\cdot) \,$ holds on the interval $[0, \widetilde{{\cal S}} (\widehat{\Gamma}+ W )\wedge  \widetilde{{\cal S}} ( \overline{\Gamma} + W ) )\,$  and, from the strict increase of the mapping $\,   \Theta_{x_0} (\cdot)\,$ once again,  we deduce the {\it ``comparison for SIEs"} result 
$$
\widehat{X}  ( t) \le \overline{X}  ( t)\,, \qquad 0 \le t < \mathcal{S} (\widehat{X})  \wedge  \mathcal{S} (\overline{X})\,.
$$
This compares the maximal solution $\, \overline{X}  (\cdot)\,$ of the SIE~\eqref{1b} to an arbitrary solution $\widehat{X}  (\cdot)\,$ of the SIE~\eqref{1bb}, under the conditions $\widehat{x}_0 \le x_0\,$ and \eqref{bb}.

  \section{Continuity of the input-output map}
  \label{cont}

Once it has been established that the equation \eqref{1b} can be solved pathwise under appropriate conditions, it is important from the point of view of modeling and approximation to know whether the progressively measurable mapping $\,\mathfrak{X}: [0,\infty) \times C([0,\infty); \R) \rightarrow \overline \I\,$ that realizes its solution  $\, X(\cdot) = \mathfrak{X} (\cdot\,, W)\,$ in terms of the canonical process $\,W(\cdot)\,$ (the ``input" to this equation) is actually a {\it continuous} functional. 

The first result of this type for classical SDEs was established by \citet{Wong_Zakai_a, Wong_Zakai_b}; similar results with simpler proofs were obtained by \citet{Doss1977} and \citet{Sussmann1978}. Wong-Zakai-type approximations have been the subject of intense investigation.   
Some pointers to the relevant literature are provided in \citet{McShane1975}, \citet{Protter1977}, \citet{Marcus1981}, 
\citet{Ikeda_Watanabe}, \citet{KurtzPardouxProtter}, \citet{BassHamblyLyons}, \citet{AidaSasaki},  \citet{DaPelo2013}, and \citet{Zhang_2013}.

We are now ready to state the main result of the present section, and two important corollaries.    In order to simplify the exposition, we shall place ourselves in the context of Subsection~\ref{SS:Ex2}, impose the Barrow-Osgood conditions~\eqref{Osg}, and assume that the drift function $\mathfrak{b}(\cdot\,, \cdot\,, \cdot)$ is bounded. In light of the Remark~\ref{BO_Bound}, the SIE~\eqref{1b} is then  free of explosions. 

\begin{thm}[{\bf Continuity of  the input-output map}] 
\label{WZ} 
	In the context of Subsection~\ref{SS:Ex2} and under the Barrow-Osgood conditions~\eqref{Osg}, we assume that the drift function $\mathfrak{b}(\cdot\,, \cdot\,, \cdot)$ of the SIE~\eqref{1b}  is bounded and satisfies, for each given  $\,n \in \N\,$,  the following conditions:
\begin{enumerate}[(i)]
	\item   the function   $\R \ni w \mapsto  \mathfrak{b}(t,w,\xi)$ is continuous for all $\,(t,\xi) \in [0, \infty) \times \I\,;$  and 
	\item for all $(t,w,\xi_1, \xi_2) \in [0,n] \times [-n,n] \times (\ell_n, r_n)^2$, the   	local Lipschitz condition 
\begin{align*}
	\big|\mathfrak{b}(t,w, \xi_1) - \mathfrak{b}(t,w,\xi_2)\big| \, \leq  \, L_n\, |\xi_1 - \xi_2| 
\end{align*}
is satisfied, where   the  constant $L_n<\infty$   depends only on the integer $n$.
\end{enumerate}
Then the following statements hold: 
\begin{enumerate}
	\item  For each path $\,\omega  \in C([0,\infty);\R),$ the OIE~\eqref{2LL} has a unique  solution $\Gamma_\omega(\cdot)$. This solution is progressively measurable and satisfies $\Gamma_\omega(\cdot) \in \R$.
	\item If   $\,\big\{ \omega_k (\cdot)\big\}_{k \in \N}\,$ is a sequence of continuous paths in $\,C([0,\infty);\R)$ such that 
\begin{equation}
\label{app1} 
 \lim_{k \uparrow \infty} \sup_{t \in [0,n]} \big|\omega(t) - \omega_k(t)\big| = 0 \,,\qquad  n \in \N  
\end{equation}
holds for some $\omega (\cdot) \in C([0,\infty);\R)$, then with 
\begin{equation*}
\mathbf{ x} (\cdot) \, :=  \, \Theta_{x_0} \big(\Gamma_\omega(\cdot) + \omega(\cdot)\big) \qquad \text{and} \qquad \mathbf{ x}_k (\cdot) \, := \,  \Theta_{x_0}\big(\Gamma_{\omega_k}(\cdot) + \omega_k(\cdot)\big)\,,
\end{equation*} 
we have 
\begin{equation}
\label{app2}
\lim_{k \uparrow \infty}\, \sup_{t \in [0,n]} \big| \mathbf{ x} ( t)  -  \mathbf{ x}_k ( t) \big|  = 0 , \qquad n \in \N\,.  
\end{equation}
 In particular,  the ``input-output mapping" $\,\omega \longmapsto \Theta_{x_0}\big(\Gamma_\omega(\cdot) + \omega(\cdot)\big) \equiv  \mathfrak{X} ( \cdot \,, \omega)$ is continuous as a function from the canonical space $\,C([0,\infty); \R)\,$ into the space $\,C \big([0,\infty);  \I \, \big)\,,$ where both spaces are equipped with the topology of uniform convergence on compact  subsets.
\end{enumerate}
\end{thm}

The proof of Theorem~\ref{WZ} is provided at the end of this section.  
On the strength of  Theorems~\ref{WZ} and \ref{T1L}, and under their conditions, the SIE~\eqref{1b}  has a unique solution $X(\cdot)$ on the filtered probability  space $\, ( \Omega, {\cal F}, \Prob),\,  \mathbb{F} =  \{ \mathcal{F} (t) \}_{0 \le t < \infty}$ for any given semimartingale measure $\,\Prob \in \mathfrak{P}$, with $\Prob(\mathcal S(X) = \infty) = 1$.

\begin{cor}[{\bf Wong-Zakai approximations}]  \label{C:WZA}
Under the setting and with the assumptions of Theorem~\ref{WZ}, and for an arbitrary but fixed  semimartingale measure $\,\Prob \in \mathfrak{P}\,$, suppose that $\,\,W_k (\cdot)  =  \int_0^{\, \cdot} \varphi_k (t) \, \dx t\,,~~ k \in \N\,$ are absolutely continuous $\,\Prob$--almost sure approximations of the $\,\Prob$--semimartingale $\, W(\cdot)\,$ in the sense of \eqref{app1}, for some  sequence $\,  \{  \varphi_k (\cdot) \}_{ k \in \N}\,$ of      $\, \mathbb{F}$--progressively measurable and locally integrable  processes.  Let $\,\Gamma(\cdot)\,$ and $\,\{\Gamma_k(\cdot)\}_{k \in \N}\,$ denote the  solutions of the OIE~\eqref{2LL} corresponding to $\, W(\cdot)\,$ and $\,\{W_k(\cdot)\}_{k \in \N}\,$, respectively. 

Then the  processes $\,X_k (\cdot)  \equiv  \Theta_{x_0} (\Gamma_k(\cdot)+ W_k(\cdot))\,, ~~k \in \N \,$  satisfy $\mathbb{P} (\mathcal{S} (X_k) = \infty) =1$ and the analogues of    \eqref{1b}  in the present context, namely, the OIEs 
$$
X_k(\cdot) \,=\, x_0 + \int_0^{\,\cdot} \mathfrak{s} \big(  X_k(t)\big) \Big(   \varphi_k (t)+ \mathfrak{b}\big(t,  W_k(t), X_k( t) \big)   \Big) \, \dx t\,,
$$
and converge almost surely to the solution $\,X(\cdot) = \Theta_{x_0} (\Gamma(\cdot) +  W(\cdot))\,$ of the SIE \eqref{1b}, namely
$$
X(\cdot) \,=\, x_0 + \int_0^{\,\cdot} \mathfrak{s} (  X(t))\, \big[ \,\dx W(t) + \mathfrak{b} (t,W(t),X(t)) \,\dx t \,\big] - \int_{\I} L^X (\cdot\,,   \xi) \,  \mathfrak{s}  (\xi)\, \dx \frac{1 }{ \mathfrak{s}  (\xi)  } \,,
$$
  uniformly over compact intervals, in the manner of \eqref{app2}. 
\end{cor}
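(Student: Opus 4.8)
The plan is to derive both assertions from the machinery already in place: the pathwise representation of Theorem~\ref{T1L} for the first (algebraic) claim, and the deterministic continuity statement of Theorem~\ref{WZ}(2) for the convergence, applied realization by realization.

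\textbf{The ordinary integral equation for $X_k(\cdot)$.} First I would note that each approximation $W_k(\cdot)=\int_0^{\,\cdot}\varphi_k(t)\,\dx t$ is absolutely continuous, hence of finite first variation with $\langle W_k\rangle\equiv 0$. Since $\Gamma_k(\cdot)$ solves the OIE~\eqref{2LL} it is itself absolutely continuous, so the sum $\Gamma_k(\cdot)+W_k(\cdot)$ has finite first variation and zero quadratic variation. As $\Theta_{x_0}(\cdot)$ is a difference of two convex functions, the process $X_k(\cdot)=\Theta_{x_0}(\Gamma_k(\cdot)+W_k(\cdot))$ inherits finite first variation and zero quadratic variation, whence $L^{X_k}(\cdot,\xi)\equiv 0$ in the notation of \eqref{eq: locTime}. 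The It\^o--Tanaka rule, which for such a finite-variation input collapses to the ordinary Stieltjes chain rule, then gives $\dx X_k(\cdot)=\mathfrak{s}(X_k(\cdot))\,\dx(\Gamma_k(\cdot)+W_k(\cdot))$ on the strength of \eqref{5}; substituting $\dx\Gamma_k(t)=\mathfrak{b}(t,W_k(t),X_k(t))\,\dx t$ from~\eqref{2LL} and $\dx W_k(t)=\varphi_k(t)\,\dx t$ produces exactly the asserted OIE. Equivalently, this is Theorem~\ref{T1L}(ii) applied to the finite-variation input $W_k(\cdot)$, the singular local-time term in \eqref{1b} vanishing by the above.

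\textbf{Passage to the limit.} For the convergence $X_k(\cdot)\to X(\cdot)$ I would apply Theorem~\ref{WZ}(2) pathwise. By hypothesis there is a $\Prob$--null set off which $W_k(\cdot)\to W(\cdot)$ uniformly on compact intervals; on its complement the condition~\eqref{app1} holds with $\omega_k:=W_k(\cdot)$ and limit $\omega:=W(\cdot)$. On this full-measure event I would identify $\Gamma=\Gamma_\omega$ and $\Gamma_k=\Gamma_{\omega_k}$ with the unique solutions furnished by Theorem~\ref{WZ}(1), so that, in the notation of that theorem, $X(\cdot)=\mathbf{x}(\cdot)$ and $X_k(\cdot)=\mathbf{x}_k(\cdot)$. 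The conclusion~\eqref{app2} then yields $\mathcal{S}(X)\le\liminf_{k\uparrow\infty}\mathcal{S}(X_k)$ together with $\sup_{t\in[0,\,n\wedge\mathcal{S}_n(X)]}|X(t)-X_k(t)|\to 0$ for each $n\in\N$, which is precisely the claimed $\Prob$--almost sure convergence, uniform over compact intervals up to the explosion time.

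\textbf{The main obstacle.} The substantive analytic work is entirely contained in Theorem~\ref{WZ}(2); the only genuine task here is to transfer that deterministic continuity to the random approximations. The point needing care is thus the pathwise reduction: one must verify that the assumed $\Prob$--a.s.\ uniform convergence places $\Prob$--almost every realization within the scope of Theorem~\ref{WZ}(2), and that the random solutions $\Gamma_k(\cdot)$ coincide realization-by-realization with the functional solutions $\Gamma_{\omega_k}(\cdot)$ of Theorem~\ref{WZ}(1). This last identification relies on the uniqueness asserted in Theorem~\ref{WZ}(1), while the progressive measurability and local integrability of the $\varphi_k(\cdot)$ guarantee that the resulting processes remain $\mathbb{F}$--adapted. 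Once these identifications are secured, no further estimates are required.
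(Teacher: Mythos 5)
Your proposal is correct and follows exactly the route the paper intends: the paper states Corollary~\ref{C:WZA} without a separate proof, as an immediate consequence of applying Theorem~\ref{WZ}(2) realization by realization on the almost-sure event where \eqref{app1} holds, combined with the observation that for the absolutely continuous inputs $W_k(\cdot)$ the quadratic variation, hence the local time, of $\Gamma_k(\cdot)+W_k(\cdot)$ vanishes, so the It\^o--Tanaka formula collapses to the Stieltjes chain rule and yields the stated OIE. Your identification of the random solutions with the deterministic functionals $\Gamma_{\omega_k}(\cdot)$ via the uniqueness in Theorem~\ref{WZ}(1) is precisely the point that makes the pathwise transfer legitimate.
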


We  formulate now a support theorem, which follows almost directly from Corollary~\ref{C:WZA}.  First, we   introduce some necessary notation.   For any given semimartingale measure $\,\Prob \in \mathfrak{P}\,$  and any initial position $\, x_0 \in \I\,$,  we denote by $\,\mathfrak{U}^\Prob (x_0)  \subseteq  C \big([0,\infty);  \I \, \big)\, $ the support  under $\,\Prob\,$ of the solution process $\,X(\cdot)= \Theta_{x_0} (\Gamma_W(\cdot) +  W(\cdot))\,$  for the SIE  \eqref{1b}; this is  the smallest closed subset of   $C \big([0,\infty);  \I \, \big)$,  equipped with  the topology of uniform convergence on compact sets, with the property $\,\Prob \big(X(\cdot) \in \mathfrak{U}^\Prob (x_0) \big) = 1\,$.

Moreover, we let $\,C^{\text{PL}}\, $ and $\,C^\infty \,$ denote, respectively, the spaces of piecewise linear and infinitely differentiable functions $\, \omega : [0,\infty) \rightarrow \R$.
For any given subset $A $ of $\,  C\big([0,\infty);  \I   \big)$, we denote by $\,\overline{A}\,$  its topological closure under  the topology of uniform convergence on compact sets.

\begin{cor}[{\bf Support theorem}] 
Under the setting and with the assumptions of Theorem~\ref{WZ},    and with a  fixed   semimartingale measure $\,\Prob \in \mathfrak{P}\,,$ we have   
\begin{align*}
	\mathfrak{U}^\Prob (x_0)  \subseteq   \overline{\,\big\{ \Theta_{x_0}(\Gamma_\omega(\cdot) + \omega(\cdot))\,:\, \omega \in C^{\text{PL}}   \big\}\,} ~ \quad \text{and} \quad ~
	\mathfrak{U}^\Prob (x_0)   \subseteq   \overline{\,\big\{  \Theta_{x_0}(\Gamma_\omega(\cdot) + \omega(\cdot))  \,:\, \omega \in C^\infty   \big\}\,} .
\end{align*}
Moreover,  if $\,\Prob = \Prob_*\,$ is the Wiener measure,      the above set inclusions    become   equalities.
\end{cor}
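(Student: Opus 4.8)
The plan is to combine the pathwise continuity of the input--output map established in Theorem~\ref{WZ} with the density of the approximating classes $C^{\text{PL}}$ and $C^\infty$ in the path space $C([0,\infty);\R)$. Throughout I write $\mathfrak{X}(\cdot\,,\omega) := \Theta_{x_0}\big(\Gamma_\omega(\cdot)+\omega(\cdot)\big)$ for the input--output map, which by Theorem~\ref{WZ}(2) is continuous from $C([0,\infty);\R)$, equipped with the topology of local uniform convergence, into $\big(C_a([0,\infty);\overline{\I}\,),\mathrm{d}^\Xi\big)$ with the metric of \eqref{eq:d_xi}. This continuity is precisely the analytic content underlying the almost-sure Wong--Zakai convergence of Corollary~\ref{C:WZA}.

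For the two inclusions I would argue pathwise. Given any $\omega \in \Omega$, let $\omega^{(m)}$ be the dyadic polygonal interpolation of the path $W(\omega)$, agreeing with it at the times $j\,2^{-m}$; then $\omega^{(m)} \in C^{\text{PL}}$, it shares the initial value $W(0,\omega)$, and it converges to $W(\omega)$ locally uniformly. By the continuity in Theorem~\ref{WZ}(2) we then have $\mathfrak{X}(\cdot\,,\omega^{(m)}) \to \mathfrak{X}(\cdot\,,W(\omega)) = X(\omega)$ in $\mathrm{d}^\Xi$, so $X(\omega)$ belongs to the $\mathrm{d}^\Xi$--closure of $\{\mathfrak{X}(\cdot\,,\varpi):\varpi \in C^{\text{PL}}\}$. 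Since this holds for \emph{every} $\omega$, the random path $X(\cdot)$ takes values in this closed set surely, whence the support $\mathfrak{U}^\Prob(x_0)$ --- being the smallest closed set of full measure --- is contained in it. Replacing polygonal interpolation by mollification (convolution with a smooth, compactly supported kernel) produces $C^\infty$ approximants with the same properties, yielding the second inclusion.

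For the equality under $\Prob_*$, I would invoke the general fact that, for a continuous map $f$ out of a separable metric space and a Borel probability $\mu$, one has $\operatorname{supp}(f_*\mu)=\overline{f(\operatorname{supp}\mu)}$: the inclusion $\supseteq$ is immediate from continuity, while $\subseteq$ follows because $f^{-1}\big(\overline{f(\operatorname{supp}\mu)}\big) \supseteq \operatorname{supp}\mu$ has full $\mu$--measure, so $\overline{f(\operatorname{supp}\mu)}$ is a closed set of full $f_*\mu$--measure. Applying this with $f=\mathfrak{X}(\cdot\,,\cdot)$, $\mu=\Prob_*$, and using that $C([0,\infty);\R)$ is Polish (so that $\operatorname{supp}\Prob_*$ does carry full measure), gives $\mathfrak{U}^{\Prob_*}(x_0)=\overline{\mathfrak{X}\big(\cdot\,,\operatorname{supp}\Prob_*\big)}$. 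By the classical Stroock--Varadhan description, $\operatorname{supp}\Prob_*=\{\omega\in C([0,\infty);\R):\omega(0)=0\}$; and the polygonal (respectively mollifier) approximants constructed above, applied to such $\omega$, lie in $C^{\text{PL}}$ (respectively $C^\infty$) and still vanish at $0$. Hence $\mathfrak{X}(\cdot\,,C^{\text{PL}})$ and $\mathfrak{X}(\cdot\,,C^\infty)$ are both $\mathrm{d}^\Xi$--dense in $\mathfrak{X}(\cdot\,,\operatorname{supp}\Prob_*)$, which upgrades the two inclusions to equalities.

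The main obstacle I anticipate is the reverse inclusion in the equality, namely correctly identifying $\operatorname{supp}\Prob_*$ and matching initial values: one must take the approximating classes within $\{\omega(0)=0\}$, so that $\mathfrak{X}(\cdot\,,\cdot)$ maps them to paths issuing from $x_0$, as every element of $\mathfrak{U}^{\Prob_*}(x_0)$ does. A secondary point is to verify the hypotheses of the pushforward--support identity --- chiefly that $\operatorname{supp}\Prob_*$ has full measure, which is guaranteed by Polishness of $C([0,\infty);\R)$ --- and to confirm that the stop-rule $\widetilde{\mathcal S}(\Gamma_\omega+\omega)$ behaves well along the approximating sequences, which is exactly the lower-semicontinuity statement $\mathcal{S}(\mathbf{x})\le\liminf_k\mathcal{S}(\mathbf{x}_k)$ already recorded in \eqref{app2}.
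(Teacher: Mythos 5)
Your proof is correct. The two inclusions are argued exactly as in the paper: continuity of the input--output map $\mathfrak{X}$ from Theorem~\ref{WZ}(2) (equivalently, Corollary~\ref{C:WZA}) plus density of $C^{\text{PL}}$ and $C^{\infty}$ in $C([0,\infty);\R)$. For the equalities under $\Prob_*$, however, you take a genuinely different route. The paper invokes a change-of-measure (Girsanov) argument modelled on Lemma~3.1 of Stroock--Varadhan, applied directly to the solution process, to show that the law of $X(\cdot)$ charges every neighbourhood of every approximating path. You instead use the general identity $\operatorname{supp}(f_*\mu)=\overline{f(\operatorname{supp}\mu)}$ for a continuous map $f$ on a separable metric space, reducing everything to the classical full-support property $\operatorname{supp}\Prob_*=\{\omega:\omega(0)=0\}$ of Wiener measure. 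Both steps of your pushforward--support identity check out, and $C([0,\infty);\R)$ is Polish, so $\operatorname{supp}\Prob_*$ does carry full measure. The Girsanov content has not disappeared --- it is hiding inside the cited full-support theorem for Wiener measure --- but your reorganization is cleaner and is made possible precisely by the strong pathwise continuity of $\mathfrak{X}$, which is unavailable in the original Stroock--Varadhan setting and forces them (and the paper) into the measure-theoretic argument. One further merit of your write-up: you notice that the equality can only hold if the classes $C^{\text{PL}}$ and $C^{\infty}$ are understood to consist of paths vanishing at the origin (otherwise $\mathfrak{X}(\cdot,\omega)$ starts at $\Theta_{x_0}(\omega(0))\neq x_0$ and leaves the support), a normalization the paper's statement and proof pass over in silence.
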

\begin{proof}
	The set inclusions follow from Corollary~\ref{C:WZA}  and the fact that both spaces $\,C^{\text{PL}} \,$ and $\,C^\infty \,$ are dense in the space of continuous functions $\, C ([0,\infty); \R)\,$, equipped with the topology of uniform convergence on compact sets. Under the Wiener measure $\Prob_*$, the reverse implications follow from a change-of-measure argument  similar to Lemma~3.1 in \citet{SV_support}.
\end{proof}

\subsection*{The proof of Theorem~\ref{WZ}}

\begin{proof}[Proof of Theorem~\ref{WZ}]
To prove the first part of the theorem, fix a path $\omega (\cdot) \in C([0,\infty); \R)$ and $n \in \N$.  Next, recall the function $G(\cdot\,, \cdot\,, \cdot)\,$ of \eqref{cue} and define
the function   
\begin{align*}
[0, \infty) \times \R \ni (t,\gamma) \longmapsto g_\omega(t,\gamma) = G\left(t, {\omega}(t), \gamma\right) \in \R.
\end{align*}
If $\gamma \in [-n,n]$ we have $ \omega(\cdot) + \gamma \in (\widetilde \ell_m, \widetilde r_m)$ on $[0,n]$ for some sufficiently large $m \in \N$ and thus, the local Lipschitz condition
\begin{align*}
	\big| g_\omega(t,  \gamma_1) - g_\omega(t,\gamma_2)\big| \leq L_m  \big|\Theta_{x_0}(\omega(t) + \gamma_1) - \Theta_{x_0}(\omega(t) + \gamma_2) \big|  \leq   \left( L_m\, \sup_{\xi \in (\ell_m, r_m)} \mathfrak{s}(\xi) \right) |\gamma_1 - \gamma_2|
\end{align*}
for all $t \in [0,n]$ and $(\gamma_1, \gamma_2) \in [-n,n]^2$.  Since the function $g_\omega(\cdot, \cdot)$ is also bounded, 
Carath\'eodory's extension of the Peano  existence theorem \citep[see Theorems~2.1.1 and 2.1.3 in][]{CoddingtonLevinson}  guarantees the existence  of a  solution $\Gamma^{(n)}_\omega(\cdot)$ to the OIE
  $$
  \Gamma^{(n)}_\omega(t) =  x_0 + \int_0^{t} g_\omega\big(t,\Gamma^{(n)}_\omega  (s)\big)\, \dx s\,, \qquad 0 \le t \leq n
  $$
 up to the first time that    $\Gamma^{(n)}_\omega(\cdot)$ leaves the interval $(-n,n)$. 
  Moreover, a Picard-Lindel\"of-type argument yields the uniqueness of the solution, thus  also the non-anticipativity of the function $ (t, \omega) \mapsto \Gamma^{(n)}_\omega( t)$.  
Stitching those solutions for each $n\in \N$ together,   yields then a  unique   non-anticipative    mapping $[0,\infty) \times \Omega \ni (t, \omega) \mapsto \Gamma_\omega( t )$, as in the statement of the theorem. Since $g_\omega(\cdot, \cdot)$ is bounded, we have  $\Gamma_\omega(\cdot) \in \R$.

   In  order to conclude the proof of the first claim, we need to show now that the mapping $\omega \mapsto \Gamma_\omega( \cdot)$ is   measurable; however, while proving  below the second claim of the theorem, we shall show that this mapping is actually continuous, thus, {\it a-fortiori,}   measurable. Now measurability and $\mathbb{F}-$adaptivity -- a consequence of non-anticipativity -- lead to the progressive measurability of this mapping; see Propositions~1.1.12 and 1.1.13 of  \citet{KS1}, in conjunction with the continuity of the mapping $t \mapsto \Gamma_\omega( t )$.

For the second claim of the theorem, let us  fix paths $\{\omega_k (\cdot)\}_{k \in \N}$  and $\omega (\cdot) $ as in the statement, and an integer $n \in \N$.  Let  $\beta< \infty$ denote  an upper bound on  the function $| \mathfrak{b}(\cdot\,, \cdot\,, \cdot)|,$ and fix $m \in \N$ so that  
$$\sup_{\,t \leq n, \, k \in \N} | \omega_k(t) | <  m, \quad x_0+ n \beta + \sup_{\,t \leq n, \, k \in \N} | \omega_k(t)| < \widetilde r_m, \quad \text{and} \quad x_0 - n \beta - \sup_{\,t \leq n, \, k \in \N} | \omega_k(t)| > \widetilde \ell_m.$$
Next, observe that we have
\begin{align*}
	\big|\Gamma_\omega(\cdot) - \Gamma_{\omega_k}(\cdot)\big| 
		&\leq
	\int_0^{n} \big| G(t,  \omega(t), \Gamma_\omega(t)) - G( t, \omega_k(t), \Gamma_\omega(t) ) \big| \, \dx t  \\  &+ \int_0^\cdot \big| G( t, \omega_k(t), \Gamma_\omega(t))  -  G( t, \omega_k(t), \Gamma_{\omega_k}(t))\big| \dx t 
\end{align*}
on $[0,n]$ for all $k \in \N$. Since the function $G(\cdot\,, \cdot\,, \cdot)$ is bounded, the Dominated Convergence Theorem yields that the first term on the right-hand can be made arbitrarily small. For the second term, we can use the Lipschitz continuity of $G(\cdot\,, \cdot\,, \cdot)$ with Lipschitz constant $L_m$. An application of Gronwall's lemma then yields that $\lim_{k \uparrow \infty} \sup_{t \leq n} \big|\Gamma_\omega(t) - \Gamma_{\omega_k}(t)\big| = 0$.
Since the function $\Theta_{x_0}(\cdot)$ is locally Lipschitz continuous, the statement follows.
\end{proof}

\appendix
\section{Appendix: Regularization of OIEs}
 \label{S: regul}

The implications of  Theorem~\ref{T1L} or Corollary~\ref{T1} can prove useful for obtaining existence and uniqueness statements of OFEs in the form of the OFE~\eqref{7L}. For instance, 
Theorem~\ref{T:5.2} is a case in point.

\begin{example}  
Let us look closer at the setup of the SIE~\eqref{1c}   
under the Wiener measure $\,\Prob_*\,$ and with $\mathfrak{s}(\cdot) \equiv 1$, for a   bounded, measurable function $\, \mathfrak{b}   : [0, \infty) \times  \R \ra \R\,$. It is well known (see, for example, \citet{Zvonkin1974} or \citet{Veretennikov1981}) that   the  resulting SIE
\begin{equation}
\label{ZV}
X(\cdot)\,=\, x_0 + \int_0^{\, \cdot} \mathfrak{b} \big( t, X(t) \big) \, \mathrm{d} t + W(\cdot) 
\end{equation}
\noindent
 has a   unique  $\,\mathbb{F}
$--adapted and non-exploding solution.  In fact, \citet{Krylov2005} show that the SIE~\eqref{ZV}  admits a pathwise unique,  strong and non-explosive solution,  under  only very weak 
integrability conditions on the function $\mathfrak{b}(\cdot\,, \cdot)$;  see also \citet{Fedrizzi2011} for a simpler argument. Theorem~\ref{T1L} now implies that the corresponding OIE~\eqref{2LL}, now in the form
\begin{align}  \label{eq:Davie OIE}
\Gamma(\cdot) =  x_0 +  \int_0^{\, \cdot} \mathfrak{b} \big( t, \Gamma(t) +W(t) \big)\,  \mathrm{d} t \,,
\end{align}
also has a unique $\,\mathbb{F}
$--adapted solution $\Gamma(\cdot)$; a similar point is made by  \citet{Davie}.

We do not know a theory of OIEs that can prove such existence and uniqueness statements of this type. An explanation is given in Section~1.6 of \citet{Flandoli2013}:  {\it ``...the intuitive reason behind these uniqueness results in spite of the
singularities of the drift [is]...the regularity of the occupation measure. The measure distributed by single trajectories of diffusions...is ...very regular and diffused with respect to the occupation measure of solutions to deterministic ODEs. This regularity smooths out the singularities of the drift, opposite to the deterministic case in which the solution may persist on the singularities.'' }

The question    answered affirmatively by \citet{Davie} (see also \citet{Flandoli2011} for a simpler argument), is whether uniqueness holds for the OIE~\eqref{eq:Davie OIE} also   for almost all realizations of the Brownian paths $W(\omega)$, among all (possibly   not $\,\mathbb{F}$--adapted) functions $\Gamma(\cdot)$.   For a discussion of the subtle differences in those notions of uniqueness, we refer to the comments after Definition~1.5 in  \citet{Flandoli2011_LN}.  
Recently,  \citet{Catellier2014} further extended the regularization results to paths of fractional Brownian motion.  \qed
 \end{example}

\begin{example}  
In the context of the SIE~\eqref{eq:1.3b} in Subsection~\ref{sec3.1}, under the Wiener measure $\,\Prob_*\,$  and with $\,\mathfrak{s}(\cdot) \equiv 1\,$ and $\,\mathfrak{b}(\cdot) \equiv 0\,$, the OFE~\eqref{77c} can be simplified to   
\begin{equation*}
\Gamma(\cdot)=  x_0 +   \int_\I L^{ \Gamma +  W}(\cdot \,, \xi  ) \, \bm \mu(\dx \xi)
\end{equation*}
up to an explosion time; if additionally $\, \I = \R\,$ and $\, \bm \mu(\dx \xi) = \beta \,\bm \delta_0(\dx \xi)$ for some $\beta \in \R\,$ and the Dirac measure $\bm \delta_0(\cdot)$ at the origin,   this expression simplifies further to the equation
\begin{align}  
\label{eq:OIE Skew}
\Gamma(\cdot) = x_0 + \beta \, L^{ \Gamma +  W}(\cdot\,, 0 ) .
\end{align}

Not much can be said directly about this equation;  we note, however, that the corresponding stochastic equation~\eqref{eq:1.3b} simplifies to 
\begin{equation}
\label{skew}
X(\cdot) \,=\, x_0 + W(\cdot) + \beta \, L^X (\cdot\,, 0)\,,
\end{equation}
the stochastic equation for the Skew Brownian Motion with skewness parameter $\, \alpha = 1/(2-\beta)$.  In terms of the symmetric local time at the origin $\,
\widehat{L}^X (\cdot\,, 0)  =  ( 1 / 2)  \left( L^{X} (\cdot\,, 0) + L^{-X} (\cdot\,, 0) \right)\,$, the above equation can be written in the equivalent and perhaps more ``canonical" form 
$$
X(\cdot) \,=\, x_0 + W(\cdot) + \gamma \, \widehat{L}^X (\cdot\,, 0) \qquad \text{
with}\quad \, \gamma \,=\, \frac{2\,\beta}{\,2-\beta\,}\, =\, 2 ( 2  \alpha -1)\,.$$ 

In accordance with Theorem~\ref{T:5.2}, the SIE~\eqref{skew} has a pathwise unique, strong solution for all $\, \beta< 1\,$; the theory of the Skorokhod reflection problem guarantees such a solution for $\, \beta=1\,$; whereas it is shown in \citet{HarrisonShepp} that  there is no such solution for $\, \beta >1\,$.  From Theorem~\ref{T1L}, analogous  statements hold  for $\mathbb{F}$--adapted solutions to the OFE~\eqref{eq:OIE Skew}.  \qed
\end{example}

\bibliography{aa_bib}{}
\bibliographystyle{apalike}

\end{document}